\newtheorem{theorem}{Theorem}[section]
\newtheorem*{theorem*}{Theorem}
\newtheorem{lemma}[theorem]{Lemma}
\newtheorem{corollary}[theorem]{Corollary}
\newtheorem{proposition}[theorem]{Proposition}
\newtheorem*{question}{Question}
\DeclareSymbolFont{cyrletters}{OT2}{wncyr}{m}{n}
\DeclareMathSymbol{\Sha}{\mathalpha}{cyrletters}{"58}
\theoremstyle{remark}
\begin{document}
\raggedbottom

\title{Interpolation in Model Spaces}

\author{Pamela Gorkin}
\address{Pamela Gorkin, Department of Mathematics\\ Bucknell University\\  Lewisburg, PA  USA 17837}
\email{pgorkin@bucknell.edu}

\author{Brett D. Wick}
\address{Brett D. Wick, Department of Mathematics \& Statistics, Washington University in St. Louis, St. Louis, Missouri, USA}
\email{wick@math.wustl.edu}

\subjclass[2010]{Primary: 30H05; Secondary: 30J10, 46J15}
\keywords{Hardy space, model space, Blaschke product, interpolation}

\begin{abstract} 
In this paper we consider interpolation in model spaces, $H^2 \ominus B H^2$ with $B$ a Blaschke product. We study unions of interpolating sequences for two sequences that are far from each other in the pseudohyperbolic metric as well as two sequences that are close to each other in the pseudohyperbolic metric. The paper concludes with a discussion of the behavior of Frostman sequences under perturbations.
\end{abstract}
\renewcommand{\thefootnote}{\fnsymbol{footnote}} 
\renewcommand{\thefootnote}{\arabic{footnote}}

\maketitle

\section{Introduction}

 Let $H^\infty$ denote the space of bounded analytic functions and let $H^2$ denote the Hardy space of functions on the unit circle $\mathbb{T}$ satisfying 
\[\sup_{0 < r < 1} \int_\mathbb{T} |f(r\zeta)|^2 dm(\zeta) < \infty.\]  A sequence $(a_j)$ of points in $\mathbb{D}$ is interpolating for $H^\infty$, if for every bounded sequence $(\alpha_j)$ of complex numbers, there is a function $f \in H^\infty$ with $f(a_j) = \alpha_j$ for all $j$. A Blaschke product $B$ with zero sequence $(a_j)$ is called an interpolating Blaschke product if its zero sequence is an interpolating sequence for $H^\infty$. Carleson's theorem tells us that the Blaschke product is interpolating if and only if there exists $\delta > 0$ with \[\inf_n (1 - |a_n|^2)|B^\prime(a_n)| \ge \delta.\] The main goal of this paper is to study unions of interpolating sequences that are near and far from each other in the setting of the model space $H^2 \ominus B H^2$ with $B$ a Blaschke product.

To set the context for our work requires some notation:
For an inner function $\theta$, let $K_\theta^2 := H^2 \ominus \theta H^2 = H^2 \cap \theta (\overline{zH^2})$, where $\overline{zH^2}$ denotes the set of functions with complex conjugate in $z H^2$.  We let $K_\theta^\infty = H^\infty \cap \theta \overline{z H^\infty} = H^\infty \cap \theta \overline{zH^2}$, and we let \[K_{*\theta} := K^2_\theta \bigcap BMO,\] 
where $BMO$ denotes the space of functions of bounded mean oscillation on the unit circle.

 For a sequence $(a_j)$ of points in the open unit disk $\mathbb{D}$ satisfying the Blaschke condition $\sum_j (1 - |a_j|) < \infty$, we consider Blaschke products, or functions of the form
\[B(z) = \lambda \prod_{j = 1}^\infty \frac{|a_j|}{-a_j}\left(\frac{z-a_j}{1 - \overline{a_j}z}\right)~\mbox{where}~\lambda \in \mathbb{T}.\]
(Here, as in the future, we interpret $|a_j|/a_j = 1$ if $a_j = 0$.) We are particularly interested in Blaschke products for which the zero sequence $(a_j)$ is an interpolating sequence for $H^\infty$.

In \cite{D}, Dyakonov proved the following:

\begin{theorem}[\cite{D}]\label{thm:D1} Suppose that $(\alpha_j)$ is an $\ell_\infty$ sequence and $B$ is an interpolating Blaschke product with zeros $(a_j)$. In order that there exist a function $f \in K_B^\infty$ for which $f(a_j) = \alpha_j$ for all $j$, it is necessary and sufficient that 
\begin{equation}\label{eqn:D}
\sup_k \left|\sum_j \frac{\alpha_j}{B^\prime(a_j)(1 - a_j \overline{a_k})}\right| < \infty.\end{equation}
\end{theorem}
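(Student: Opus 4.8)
The plan is to collapse this two-sided statement into a single clean equivalence by exploiting the reproducing-kernel geometry of $K_B^2$ together with the natural conjugation on the model space. Write $k_w(z)=\frac{1-\overline{B(w)}B(z)}{1-\overline{w}z}$ for the reproducing kernel of $K_B^2$; since $B(a_j)=0$ this degenerates to the Szeg\H{o} kernel $k_{a_j}(z)=\frac{1}{1-\overline{a_j}z}$, with $\|k_{a_j}\|^2=(1-|a_j|^2)^{-1}$. Because $B$ is interpolating, Carleson's condition $\inf_j(1-|a_j|^2)|B'(a_j)|\ge\delta$ guarantees that the normalized kernels $\{(1-|a_j|^2)^{1/2}k_{a_j}\}$ form a Riesz basis of $K_B^2$; in particular the trace map $f\mapsto(f(a_j))$ is injective on $K_B^2$. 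I would first observe that \eqref{eqn:D} is a disguised statement about one function. Setting $d_j:=\overline{\alpha_j}/\overline{B'(a_j)}$ and $H:=\sum_j d_j k_{a_j}$, the bound $|d_j|\lesssim_\delta\|(\alpha_j)\|_\infty(1-|a_j|^2)$ together with the Blaschke condition forces $\sum_j|d_j|^2\|k_{a_j}\|^2<\infty$, so that $H\in K_B^2$ for every $\ell^\infty$ datum. A direct computation then gives $\overline{H(a_k)}=\sum_j\frac{\alpha_j}{B'(a_j)(1-a_j\overline{a_k})}$, so that \eqref{eqn:D} is precisely the assertion $\sup_k|H(a_k)|<\infty$.

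The second step identifies the only possible interpolant. Let $C$ denote the conjugation $Cf=B\overline{z}\,\overline{f}$ on $K_B^2$, an antilinear isometric involution with $|Cf|=|f|$ a.e.\ on $\mathbb{T}$; in particular $C$ maps $K_B^\infty$ isometrically onto itself. A short calculation on $\mathbb{T}$ shows $Ck_{a_j}=\frac{B(z)}{z-a_j}=B'(a_j)g_j$, where $g_j(z):=\frac{B(z)}{B'(a_j)(z-a_j)}$ is the system biorthogonal to $(k_{a_j})$, i.e.\ $g_j(a_k)=\delta_{jk}$. Applying the continuous map $C$ to the convergent series for $H$ yields $f_0:=CH=\sum_j\alpha_j g_j$ (convergent in $K_B^2$), whence $f_0(a_k)=\langle f_0,k_{a_k}\rangle=\alpha_k$ for all $k$. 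By injectivity of the trace map, $f_0$ is the unique element of $K_B^2$ interpolating $(\alpha_j)$, so any $K_B^\infty$ interpolant must coincide with $f_0$. Since $\|f_0\|_\infty=\|CH\|_\infty=\|H\|_\infty$, the existence of an $f\in K_B^\infty$ with $f(a_j)=\alpha_j$ is equivalent to $H\in K_B^\infty$, and the entire theorem collapses to
\[
H\in K_B^\infty\quad\Longleftrightarrow\quad\sup_k|H(a_k)|<\infty.
\]
The forward implication is trivial, and it delivers the necessity half of Dyakonov's statement at once.

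The substance, and the step I expect to be the main obstacle, is the reverse implication: that for $H\in K_B^2$ bounded point values at the nodes force $H$ to be bounded throughout $\mathbb{D}$. This is false for generic $H^2$ functions, so it must use the rigidity of $K_B^2$. My plan is to invoke that $(a_k)$ is $H^\infty$-interpolating to produce, via a bounded linear interpolation operator, a function $F\in H^\infty$ with $F(a_k)=H(a_k)$ and $\|F\|_\infty\lesssim\sup_k|H(a_k)|$; since $\langle F,k_{a_k}\rangle=F(a_k)=H(a_k)$ and both $P_BF$ and $H$ lie in $K_B^2$, injectivity of the trace map gives $H=P_BF$. The difficulty is that the Riesz projection hidden in $P_B=I-B\,P_+\!\left(\overline{B}\,\cdot\right)$ is unbounded on $L^\infty$, so this representation only places $H$ in $K_{*B}=K_B^2\cap BMO$ for free; upgrading from $BMO$ to $L^\infty$ is the heart of the matter. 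I would attack this through the $L^1$-duality $\|H\|_\infty=\sup\{\,|\int_{\mathbb{T}}H\overline{u}\,dm|:u\in L^1,\ \|u\|_1\le1\,\}$, using the explicit form $f_0=B\sum_k\frac{\alpha_k}{B'(a_k)(z-a_k)}$ together with the Carleson-measure estimate for $\sum_k(1-|a_k|^2)\delta_{a_k}$ to bound the supremum by the finite quantity $\sup_k|H(a_k)|$, the necessary cancellation being supplied precisely by hypothesis \eqref{eqn:D}. Indeed a crude termwise bound on $\sum_k\alpha_k g_k$ diverges wherever the $a_k$ cluster toward the circle, which is exactly why \eqref{eqn:D}, and not mere boundedness of $(\alpha_j)$, is the correct hypothesis.
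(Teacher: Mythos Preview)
The paper does not prove Theorem~\ref{thm:D1}; it is quoted from Dyakonov's article and used as a tool. So there is no ``paper's own proof'' to compare against. That said, the paper does record in its preliminaries the two results of Cohn (Lemmas~\ref{lem:1} and~\ref{lem:Cohn1}) that Dyakonov's original argument rests on, and these bear directly on your attempt.

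Your reduction is clean and correct: with $d_j=\overline{\alpha_j}/\overline{B'(a_j)}$ and $H=\sum_j d_j k_{a_j}$, the Carleson condition gives $d_j/(1-|a_j|^2)\in\ell^\infty$, so by Lemma~\ref{lem:1} one has $H\in K_{*B}$; the conjugation identity $CH=f_0$ and the uniqueness of the $K_B^2$ interpolant then reduce the whole theorem to the equivalence $H\in H^\infty\Longleftrightarrow\sup_k|H(a_k)|<\infty$ for this particular $H\in K_{*B}$. All of that is fine, and it matches Dyakonov's route.

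The genuine gap is the sufficiency step. You correctly identify that $P_B$ is unbounded on $L^\infty$ and that the crux is the upgrade from $K_{*B}$ to $K_B^\infty$, but your proposed $L^1$-duality attack is only a sketch: you do not actually carry out the estimate, and ``the necessary cancellation being supplied precisely by hypothesis~\eqref{eqn:D}'' is an assertion, not an argument. What closes the gap is exactly Lemma~\ref{lem:Cohn1} (Cohn's maximum principle for star-invariant subspaces): for an interpolating Blaschke product $B$ and any $g\in K_{*B}$, one has $g\in H^\infty$ if and only if $(g(a_j))\in\ell^\infty$. Since you already know $H\in K_{*B}$, this lemma delivers $H\in H^\infty$ from $\sup_k|H(a_k)|<\infty$ in one stroke, and your proof is complete. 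Without invoking (or reproving) Cohn's result, the final paragraph does not stand on its own.
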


\noindent
Note that Theorem~\ref{thm:D1} assumes only that $(a_j)$ can be interpolated to a particular sequence $(\alpha_j)$; in particular, one satisfying the conditions of equation \eqref{eqn:D}.

In this paper, we combine Dyakonov's techniques with those of Kenneth Hoffman to obtain further results about interpolation in $K_\theta^\infty$.

To discuss these results, we need a measure of separation of points in the open unit disk $\mathbb{D}$. The natural metrics are the hyperbolic or pseudohyperbolic distances. We begin with the latter.  Let \[\rho(a, z) = \left|\frac{z - a}{1 - \overline{a}z}\right|\] denote the pseudohyperbolic distance between two points $a$ and $z$ in $\mathbb{D}$.

If $(a_j)$ and $(z_j)$ are two sequences of points in $\mathbb{D}$ and we assume that we can interpolate $(a_j)$ to any $\ell_\infty$ sequence $(\alpha_j)$ and $(z_j)$ to any $\ell_\infty$ sequence $(\beta_j)$ then, using Hoffman's results, it is not difficult to show that $\rho$-separation of $(a_j)$ and $(z_j)$ implies that we can interpolate an (ordered) union of the sequences to any $\ell_\infty$ sequence. For ease of notation, we will primarily consider the union defined by alternating points of the sequences.

In this paper, we first consider the case when the sequences $(z_j)$ and $(a_j)$ are ``far from each other'': We show (Theorem~\ref{thm:rhoseparated}) that if $(a_j)$ can be interpolated to $(\alpha_j)$ in $K_B^\infty$ and $(z_j)$ can be interpolated to $(\beta_j)$ in $K_C^\infty$, then the union of the two sequences can be interpolated to the union of $(\alpha_j)$ and $(\beta_j)$ (in the appropriate order) in $K_{BC}^\infty$ if the sequences $(a_j)$ and $(z_j)$ are $\rho$-separated; that is, there exists a constant $\lambda > 0$ such that $\rho(a_j, z_k) \ge \lambda$ for all $j$ and $k$. Using Theorem~\ref{thm:D1} allows us to rephrase this as a statement about a series like the one appearing in equation \eqref{eqn:D}. 

We then consider two $\rho$-separated sequences $(a_j)$ and $(z_j)$ that are ``near each other''; that is, with the property that there exists $\lambda < 1$ with 
$\rho(a_j, z_j) < \lambda < 1$ for all $j$. In this case, we show that the modified statement of Proposition~\ref{prop:near} is true for sequences in model spaces (Theorem~\ref{thm:nearby}); that is, if $(a_n)$ is interpolating for $K_B^\infty$ and the two sequences are near each other, then $(z_n)$ is interpolating for $K_C^\infty$. 

From this result, we obtain some information about (uniform) {\it Frostman Blaschke products}. Recall that a sequence $(a_j)$ in $\mathbb{D}$ satisfies the Frostman condition if and only if
\begin{equation}
\label{eqn:fst}
\sup\left\{\sum_j \frac{1 - |a_j|}{|\zeta - a_j|} : \zeta \in \mathbb{T}\right\} < \infty. 
\end{equation} As a consequence of Vinogradov's work \cite{V}, it follows that an $H^\infty$-interpolating sequence $(a_j)$ is Frostman if and only if it is interpolating for $K_B^\infty$. This can also be seen by considering Theorem~\ref{thm:D1} and using the following: In \cite[Section 3]{C1}, Cohn shows that an interpolating sequence $(a_k)$ is a Frostman sequence if and only if 
\begin{equation}\label{eqn:Cohn}
\sup_n \sum_k \frac{1 - |a_k|}{|1 - \overline{a_k}a_n|} < \infty.
\end{equation} 

Our paper concludes with a fact about (uniform) Frostman Blaschke products that we have not seen in the literature. Recall that a Frostman Blaschke product is a Blaschke product with zeros $(a_n)$ that satisfy the Frostman condition \eqref{eqn:fst}. An example of such a Blaschke product appears in \cite{MR} (or \cite[p. 130]{CMR}) and is given by 
\[a_n = \left(1 - \frac{1}{2^n}\right)\exp\left(i \frac{2^n}{3^n}\right).\] In general, it is not easy to check that something is a Frostman Blaschke product. Vasyunin has shown that if $B$ is a uniform Frostman Blaschke product with zeros $(a_n)$, then $\sum_{n = 1}^\infty (1 - |a_n|)\log(1/(1-|a_n|)) < \infty$, but this is not a characterization. For generalizations of this as well as more discussion see \cite{AG}. Here, we show that if you start with a uniform Frostman Blaschke product and move the zeros, but not too far pseudo-hyperbolically speaking, then the resulting Blaschke product is also a uniform Blaschke product. In view of the difficulty of proving something is a Frostman Blaschke product, this result could be useful.  We accomplish this by using Dyakonov's methods and result to conclude that as long as we move the zeros of a Frostman Blaschke product within a fixed pseudohyperbolic radius $r < 1$ of the original zeros, the resulting Blaschke product will remain a Frostman Blaschke product.

 \section{Preliminaries}

In this section we collect all the necessary background and estimates that play a role in the proofs in later sections. We first recall the fact that if points are close to an interpolating sequence, then they are interpolating as well.

\begin{proposition}\label{prop:near}\cite[p. 305]{G}  Let $(a_j)$ be an interpolating sequence for $H^\infty$ and $(z_j)$ a $\rho$-separated sequence with 
\[\rho(a_j, z_j) < \lambda < 1,\] for all $j$, then $(z_j)$ is an interpolating sequence  for $H^\infty$.
\end{proposition}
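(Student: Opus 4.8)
The plan is to use the standard two–part characterization of interpolating sequences due to Carleson: a sequence $(z_j)$ in $\mathbb{D}$ is interpolating for $H^\infty$ if and only if it is $\rho$–separated and the measure $\mu_z := \sum_j (1 - |z_j|^2)\delta_{z_j}$ is a Carleson measure (see, e.g., \cite{G}). Since $\rho$–separation of $(z_j)$ is already given by hypothesis, the entire problem reduces to showing that $\mu_z$ is a Carleson measure. Applying the same characterization to the interpolating sequence $(a_j)$, I know that $\mu_a := \sum_j (1-|a_j|^2)\delta_{a_j}$ is a Carleson measure, so the strategy is to transfer the Carleson condition from $\mu_a$ to $\mu_z$ using only the hypothesis $\rho(a_j, z_j) < \lambda < 1$.

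The transfer rests on two elementary consequences of $\rho(a_j, z_j) < \lambda$. First, points within a fixed pseudohyperbolic distance have comparable distances to the boundary: there is a constant $c = c(\lambda)$ with $c^{-1}(1 - |a_j|^2) \le 1 - |z_j|^2 \le c\,(1 - |a_j|^2)$ for every $j$, which is immediate from the identity $1 - \rho(a,z)^2 = (1-|a|^2)(1-|z|^2)/|1 - \overline{a}z|^2$ together with the bound $\rho(a_j,z_j) < \lambda$. Second, a pseudohyperbolic disk $\{w : \rho(a, w) < \lambda\}$ is a Euclidean disk whose center lies near $a$ and whose diameter is comparable to $1 - |a|$, again with constants depending only on $\lambda$. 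Thus replacing $a_j$ by $z_j$ moves the point only within a region of Euclidean size comparable to $1 - |a_j|$.

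I would then verify the Carleson box condition for $\mu_z$ directly. Fix an arc $I \subset \mathbb{T}$ with Carleson box $S(I)$. If $z_j \in S(I)$ then $1 - |z_j| \lesssim |I|$, so by the first input $1 - |a_j| \lesssim |I|$, and by the second input $a_j$ lies in the Carleson box $S(\tilde I)$ over the arc $\tilde I$ concentric with $I$ and of length $|\tilde I| \le C(\lambda)\,|I|$. Hence
\[
\mu_z(S(I)) = \sum_{z_j \in S(I)} (1 - |z_j|^2) \le c \sum_{z_j \in S(I)} (1 - |a_j|^2) \le c\,\mu_a(S(\tilde I)) \le c\,C_a\,|\tilde I| \le c\,C_a\,C(\lambda)\,|I|,
\]
where $C_a$ is the Carleson constant of $\mu_a$. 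Since the final bound is a fixed multiple of $|I|$ independent of $I$, the measure $\mu_z$ is Carleson, and the proof is complete.

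The one delicate point, and the step I expect to require the most care, is the geometric claim that $z_j \in S(I)$ forces $a_j$ into a box $S(\tilde I)$ of comparable size with a dilation constant depending only on $\lambda$; this must hold uniformly, including for points $z_j$ very close to $\mathbb{T}$, where one must confirm that both the radial comparison $1 - |a_j| \asymp 1 - |z_j|$ and the angular displacement of $a_j$ from $z_j$ are controlled by $1 - |z_j|$, hence by $|I|$. Everything else is bookkeeping with constants that depend only on $\lambda$.
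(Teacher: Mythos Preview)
Your argument is correct. The Carleson--measure characterization (separation plus $\sum_j(1-|z_j|^2)\delta_{z_j}$ Carleson) is the right tool, and your box argument goes through: from $\rho(a_j,z_j)<\lambda$ one has $|a_j-z_j|=\rho(a_j,z_j)\,|1-\overline{a_j}z_j|\le \frac{2\lambda}{1-\lambda}(1-|z_j|)$, which together with $1-|a_j|\asymp 1-|z_j|$ places $a_j$ in a dilated box $S(\tilde I)$ with $|\tilde I|\le C(\lambda)|I|$. So the ``delicate point'' you flag is routine.

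The paper itself does not prove the proposition; it cites Garnett and Mortini--Rupp, but it \emph{does} record the key inequality used in that referenced proof, namely
\[
1-\rho(a_j,a_k)\le\Bigl(\tfrac{1+\lambda}{1-\lambda}\Bigr)^{2}\bigl(1-\rho(z_j,z_k)\bigr),
\]
valid for all $j,k$ (equation~(2.1) in the paper). This signals a different route: one compares the Carleson interpolation constants $\inf_j\prod_{k\ne j}\rho(\cdot,\cdot)$ directly. Since separation bounds $\rho(z_j,z_k)$ away from $0$, one has $-\log\rho(z_j,z_k)\asymp 1-\rho(z_j,z_k)$, and the displayed inequality (with roles reversed) gives $\sum_{k\ne j}(1-\rho(z_j,z_k))\lesssim\sum_{k\ne j}(1-\rho(a_j,a_k))$, hence $\prod_{k\ne j}\rho(z_j,z_k)\ge\delta'>0$.

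Both proofs are short and standard. Yours is more geometric and invokes the Carleson embedding viewpoint; the referenced one stays entirely in the pseudohyperbolic metric and yields the explicit transfer estimate~(2.1), which the paper needs later (in the proof of Theorem~5.2). If you are writing this up in the context of the paper, the second approach is preferable only because the paper actually uses the byproduct~(2.1); your approach does not produce that estimate along the way.
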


This proposition is an exercise in \cite{G}. For a proof, see \cite[Theorem 27.33]{M}. Using the same notation as above, we will need the following estimate that appears in the proof:

\begin{equation}\label{eqn:pseudo}
1 - \rho(a_j, a_k) \le \left(\frac{1 + \lambda}{1 - \lambda}\right)^2 (1 - \rho(z_j, z_k)).
\end{equation}

Recall that for two points $z$ and $w$ in $\mathbb{D}$,  the pseudohyperbolic distance is $\rho(z, w) = \left|\frac{z - w}{1 - \overline{w}z}\right|$ and the hyperbolic metric is given by  \[\beta(z, w) = \frac{1}{2} \log \frac{1 + \rho(z, w)}{1 - \rho(z, w)}.\] 

In what follows, we will consider two interpolating sequences $(a_j)$ and $(z_j)$ that are $\rho$-separated or {\it far from each other}; that is, with the property that there exists $\varepsilon > 0$ with
\begin{equation}\label{eqn:bounded}
\inf_{j, k} \rho(a_j, z_k) \ge \varepsilon.
\end{equation} 
We then consider sequences that are {\it near each other} in the sense that there exists $\varepsilon < 1$ with 
$\rho(a_j, z_j) < \varepsilon < 1$ for all $j$. In this case, we have the following estimates that we will refer to later.  Let $\varepsilon$ be chosen with $0 < \varepsilon < 1$.  Suppose that $\rho(a_j, z_j) \le \varepsilon$ for all $j$. Then $r:=\sup_{j, k} \beta(a_j, z_k) \le \frac{1}{2} \log \frac{2}{1 - \varepsilon} < \infty$. Let $s = \tanh r \in (0, 1)$ and apply (\cite[Proposition 4.5]{Zhu}) to obtain for each $j$ and $k$,

\begin{equation}\label{eqn:inequality1} 1 - s \le \frac{1 - s|z_k|}{1 - |z_k|^2} \le \frac{1}{|1 - \overline{a_j} z_k|}~\mbox{and}~\frac{1 - s|a_j|}{1 - |a_j|^2} \le \frac{1}{|1 - \overline{a_j} z_k|}.\end{equation}

Thus, 
\[\frac{1 - |z_k|^2}{|1 - \overline{a_j}z_k|} \ge 1 - s |z_k| \ge 1 - \tanh r,\] and a similar inequality holds with $z_j$ replaced by $a_j$.

Our work relies on Dyakonov's proof techniques, which rely on the following two results of W. Cohn. The convergence below is taken in the weak-$\ast$ topology of $BMOA := BMO \cap H^2$, and it also converges in $H^2$. Thus, the convergence also holds on compact subsets of $\mathbb{D}$.  

\begin{lemma}\label{lem:1} 
\cite[Lemma 3.1]{C}
Given an interpolating Blaschke product $B$ with zeros $(a_j)$, the general form of a function $g \in K_{*B}$ is
\[g(z) = \sum_j c_j \frac{1 - |a_j|^2}{1 - \overline{a_j}z},\] where $(c_j) \in \ell_\infty$.
\end{lemma}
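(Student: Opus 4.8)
The plan is to prove the two inclusions separately, using the reproducing kernels of $K_B^2$ together with the duality between $H^1$ and $BMOA$. Write $k_a(z)=(1-\overline{a}z)^{-1}$ for the Cauchy/reproducing kernel; since $B(a_j)=0$, each $k_{a_j}$ is the reproducing kernel of $K_B^2$ at $a_j$, and in particular $k_{a_j}\in K_B^2$. Thus $(1-|a_j|^2)k_{a_j}(z)=\frac{1-|a_j|^2}{1-\overline{a_j}z}$ lies in $K_B^2$, and the assertion is that the map $T:(c_j)\mapsto\sum_j c_j(1-|a_j|^2)k_{a_j}$ is a bijection from $\ell_\infty$ onto $K_{*B}$.

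First I would treat the inclusion $T(\ell_\infty)\subseteq K_{*B}$. Convergence in $H^2$ is automatic: because $B$ is interpolating, the normalized kernels $(1-|a_j|^2)^{1/2}k_{a_j}$ form a Riesz basis of $K_B^2$, and the coefficients $d_j=c_j(1-|a_j|^2)^{1/2}$ satisfy $\sum_j|d_j|^2\le\|c\|_\infty^2\sum_j(1-|a_j|^2)<\infty$ by the Blaschke condition, so the series converges in $K_B^2$. It remains to see that $g:=Tc\in BMOA$ with $\|g\|_{BMOA}\lesssim\|c\|_\infty$. Pairing against $h\in H^1$ and using the reproducing identity $\langle k_{a_j},h\rangle=\overline{h(a_j)}$ (the Cauchy formula at interior points) gives $|\langle g,h\rangle|\le\|c\|_\infty\sum_j(1-|a_j|^2)|h(a_j)|$. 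Because $(a_j)$ is interpolating, the measure $\sum_j(1-|a_j|^2)\delta_{a_j}$ is a Carleson measure, so $\sum_j(1-|a_j|^2)|h(a_j)|\le C\|h\|_{H^1}$; by Fefferman's duality $(H^1)^*=BMOA$ this yields $g\in BMOA$ with the desired bound.

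For the reverse inclusion I would run the same duality backwards. Consider $R:H^1\to\ell_1$, $Rh=\big((1-|a_j|^2)h(a_j)\big)_j$. The Carleson estimate above says $R$ is bounded, and the classical $H^1$-version of the interpolation theorem for interpolating sequences (Shapiro--Shields) says $R$ is onto $\ell_1$. A computation with the Cauchy pairing identifies the adjoint $R^*:\ell_\infty=(\ell_1)^*\to(H^1)^*=BMOA$ with $T$ (up to conjugating the coefficient sequence). Since $R$ is onto, $R^*$ is bounded below and its range equals the annihilator $(\ker R)^\perp$. Finally $\ker R=\{h\in H^1:h(a_j)=0\ \forall j\}=BH^1$, and for $g\in BMOA$ the condition $g\perp BH^1$ is equivalent, by density of $H^2$ in $H^1$ and continuity of $g$ as a functional, to $g\perp BH^2$, i.e.\ to $g\in K_B^2$. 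Hence the range of $T$ is exactly $BMOA\cap K_B^2=K_{*B}$, and every $g\in K_{*B}$ equals $Tc$ for some $c\in\ell_\infty$ with $\|c\|_\infty\lesssim\|g\|_{BMOA}$.

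The main obstacle is this reverse inclusion, and specifically its two ingredients: the surjectivity of $R$ onto $\ell_1$ (the $H^1$ form of the interpolation theorem, which is where the hypothesis that $B$ is interpolating is genuinely used) and the clean identification of $\mathrm{range}(R^*)$ with $K_{*B}$. The latter requires care with the weak-$*$ topology -- one must check that $R$ being onto forces $\mathrm{range}(R^*)$ to be weak-$*$ closed and to coincide with $(\ker R)^\perp$ -- and with the conjugations built into the sesquilinear $H^1$--$BMOA$ pairing, which is exactly what turns the annihilator of $BH^1$ into the model space $K_B^2$ rather than a twisted variant of it.
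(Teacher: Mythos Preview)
The paper does not supply its own proof of this lemma; it is quoted verbatim from Cohn \cite[Lemma~3.1]{C} and used as a black box. So there is nothing in the present paper to compare your argument against.

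That said, your outline is sound and is in fact the standard route to this result. The forward inclusion is exactly right: the Carleson-measure property of an interpolating sequence gives $\sum_j(1-|a_j|^2)|h(a_j)|\lesssim\|h\|_{H^1}$, and Fefferman duality then places $g=Tc$ in $BMOA$. For the reverse inclusion your duality scheme is also correct. The key functional-analytic fact you need is the closed range theorem: since $R:H^1\to\ell_1$ is surjective (Shapiro--Shields interpolation in $H^1$), the adjoint $R^*:\ell_\infty\to BMOA$ has norm-closed range equal to $(\ker R)^\perp$, and $\ker R=BH^1$ gives $(\ker R)^\perp=BMOA\cap K_B^2=K_{*B}$ exactly as you say. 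The identification $R^*=T$ (up to the conjugation built into the sesquilinear pairing) is a routine computation with the reproducing property of $k_{a_j}$.

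The only place to be a little more careful than you were is in the forward direction when you pass from ``$g\in H^2$'' to ``$g\in BMOA$'': the term-by-term pairing $\langle g,h\rangle=\sum_j c_j(1-|a_j|^2)\overline{h(a_j)}$ is justified a priori only for $h\in H^2$, so one should first obtain the bound on a dense subclass (polynomials, say), deduce $g\in BMOA$, and then extend to all of $H^1$ by continuity. You flag this weak-$*$ issue yourself, so this is not a gap so much as a point that deserves one more sentence in a polished write-up.
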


\begin{lemma}\label{lem:Cohn1}
\cite[Corollary 3.2]{C1}
Let $B$ be an interpolating Blaschke product with zeros $(a_j)$ and let $g \in K_{*B}$. Then $(g(a_j))\in \ell_\infty$ if and only if $g \in H^\infty$.
 \end{lemma}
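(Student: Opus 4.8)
The easy direction is immediate: if $g \in H^\infty$ then $|g(a_j)| \le \|g\|_\infty$ for every $j$, so $(g(a_j)) \in \ell_\infty$. For the converse I would prove the quantitative statement $\|g\|_{H^\infty} \lesssim M$, where $M := \sup_j |g(a_j)| < \infty$. Since $g \in K_{*B} = K_B^2 \cap BMO \subset BMOA$, Lemma~\ref{lem:1} lets me write $g(z) = \sum_j c_j \frac{1-|a_j|^2}{1-\overline{a_j}z}$ with $(c_j) \in \ell_\infty$. Writing $k_{a_j}(z) = (1 - \overline{a_j}z)^{-1}$ for the reproducing kernel of $H^2$ at $a_j$, one has $g(a_j) = \langle g, k_{a_j}\rangle$, and since the $k_{a_j}$ have dense span in $K_B^2$ the function $g$ is already determined by the data $(g(a_j))$; the content of the lemma is the accompanying norm control.

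The plan is a duality argument designed so that the pairing only sees the values $g(a_j)$. Recall that $\|g\|_{H^\infty}$ is computed by pairing $g$ against the unit ball of the predual of $H^\infty$; because $g \in K_B^2$ is orthogonal to $BH^2$, in each such pairing only the model-space component of the test element matters, and that component lies in the closed span of the reproducing kernels $k_{a_j}$; call it $u$. The engine of the proof is an $\ell_1$ atomic decomposition of these test elements adapted to the zeros: each admits a representation
\[ u = \sum_j \lambda_j (1-|a_j|^2) k_{a_j}, \qquad \sum_j |\lambda_j|(1-|a_j|^2) \lesssim \|u\|. \]
Granting this, and using that $g \in BMOA$ makes the pairing against the (merely $H^1$) element $u$ continuous by Fefferman duality, I can pass $g$ through the convergent sum and apply the reproducing property to obtain $\langle g, u\rangle = \sum_j \overline{\lambda_j}(1-|a_j|^2) g(a_j)$. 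Hence $|\langle g, u\rangle| \le M \sum_j |\lambda_j|(1-|a_j|^2) \lesssim M \|u\|$, and taking the supremum over the predual ball gives $\|g\|_{H^\infty} \lesssim M$; as $g \in H^2$, this yields $g \in H^\infty$.

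The main obstacle is the uniform $\ell_1$ atomic decomposition above --- the predual companion of Lemma~\ref{lem:1} --- and this is precisely where the interpolating hypothesis enters: interpolation is equivalent to the normalized kernels $(1-|a_j|^2)^{1/2} k_{a_j}$ forming an unconditional basic sequence with bounded analysis and synthesis operators (the Shapiro--Shields/Carleson condition), from which one extracts the $\ell_1$ statement needed here. A secondary point to pin down is the correct identification of the predual and the verification that $\|g\|_\infty$ is genuinely computed by this restricted pairing; the role of the $BMO$ hypothesis is exactly to guarantee that the pairing against the atoms is well defined and continuous, licensing the interchange of $g$ with the atomic sum. Notably, the argument does not collapse to ``every such $g$ is bounded'': for a non-Frostman interpolating $B$ there are $g \in K_{*B} \setminus H^\infty$, but for these $M = \infty$, so no contradiction arises.
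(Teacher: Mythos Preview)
The paper does not supply its own proof of this lemma; it is quoted verbatim from Cohn \cite[Corollary~3.2]{C1} and used as a black box. So there is nothing in the present paper to compare your sketch against.

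As for the sketch itself, the ingredients you list---the unconditional (Riesz) basis property of the normalized kernels for an interpolating sequence, an $\ell^1$ atomic decomposition on the predual side, and the BMO hypothesis to make pairings with $H^1$ elements meaningful---are indeed the ones that drive Cohn's argument. But the duality setup, as you have written it, has a gap. To conclude $g\in H^\infty$ you must bound $\bigl|\int_{\mathbb T} g\overline{\phi}\,dm\bigr|$ uniformly over the unit ball of $L^1$ (equivalently of $L^1/\overline{H^1_0}$), not merely over $H^1$. Your passage to ``the model-space component of the test element'' implicitly applies $P_B P_+$ to $\phi\in L^1$, and neither $P_+$ nor $P_B$ is bounded on $L^1$; so the inequality $\|u\|\lesssim\|\phi\|_{L^1}$ that you need is precisely the substantive step, not a formality. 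What the Fefferman pairing gives you is a bound of the form $|\langle g,u\rangle|\lesssim \|g\|_{BMO}\,\|u\|_{H^1}$, which only recovers the finiteness of $\|g\|_{BMO}$ (already assumed), not $\|g\|_\infty$. You can repair this, but it requires stating carefully what norm you put on $u$ (essentially the quotient norm inherited from $L^1$) and then proving the atomic decomposition relative to \emph{that} norm; this is exactly the ``predual companion of Lemma~\ref{lem:1}'' you allude to, and it is the whole content of the result, not a preliminary.

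A route that avoids this delicacy: since $(a_j)$ is $H^\infty$-interpolating and $(g(a_j))\in\ell_\infty$, choose $f\in H^\infty$ with $f(a_j)=g(a_j)$; then $g-f\in BH^2$, so $g=P_Bf$. One then shows $P_Bf\in L^\infty$ directly from the kernel expansion of Lemma~\ref{lem:1} together with the Carleson measure/Riesz basis estimates available for interpolating sequences, rather than from an abstract duality step. This is closer to how Cohn organizes the proof.
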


Another key ingredient in our proofs are the following three theorems from Kenneth Hoffman's seminal paper, which we recall here.  

\begin{lemma}[Hoffman's Lemma] \cite{H}, \cite[p. 395]{G} Suppose that $B$ is an interpolating Blaschke product with zeros $(z_n)$ and 
\[\inf_n(1 - |z_n|^2)|B^\prime(z_n)| \ge \delta > 0.\]
Then there exist $\lambda := \lambda(\delta)$ with $0 < \lambda < 1$ and $r:=r(\delta)$ with $0 < r < 1$ satisfying 
\[\lim_{\delta \to 1} \lambda(\delta) = 1~\mbox{and}~\lim_{\delta \to 1} r(\delta) = 1\] such that
\[\{z: |B(z)| < r\}\] is the union of pairwise disjoint domains $V_n$ with $z_n \in V_n$ and
\[V_n \subset \{z: \rho(z, z_n) < \lambda\}.\]
\end{lemma}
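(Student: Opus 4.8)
The plan is to realize each $V_n$ as the connected component of the sublevel set $\{z : |B(z)| < r\}$ that contains $z_n$, and to pin down its size by controlling $|B|$ on pseudohyperbolic spheres. I would begin by factoring $B = b_n B_n$, where $b_n$ is the single Blaschke factor with zero $z_n$ and $B_n = \prod_{m \neq n} b_m$; since $|b_n'(z_n)|(1 - |z_n|^2) = 1$, the hypothesis reads $|B_n(z_n)| = (1-|z_n|^2)|B'(z_n)| \ge \delta$, and because every factor is at most $1$ this forces the separation $\rho(z_n, z_m) \ge \delta$ for all $m \neq n$ as well as $\sum_{m \neq n}\log(1/\rho(z_n,z_m)) \le \log(1/\delta)$. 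Throughout I would keep $\lambda < \delta$ (for concreteness $\lambda(\delta) = 1 - \sqrt{1 - \delta}$, which satisfies $\lambda < \delta$ and $\lambda \to 1$ as $\delta \to 1$), since $\lambda < \delta$ is exactly what makes the balls $\{\rho(\cdot, z_n) < \lambda\}$ omit the other centers and hence makes the $V_n$ pairwise distinct components of a single open set---distinct components being automatically disjoint.

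The first quantitative step is a \emph{local} lower bound for $|B_n|$ on the closed ball $\{\rho(\cdot, z_n) \le \lambda\}$. For such $z$ the reverse triangle inequality for the (conformally invariant) pseudohyperbolic metric $\rho$ gives, writing $t = \rho(z, z_n) \le \lambda$ and $\mu_m = \rho(z_n, z_m)$, both a lower bound $\rho(z, z_m) \ge (\mu_m - t)/(1 - t\mu_m) \ge (\delta - \lambda)/(1 - \lambda\delta) =: \eta > 0$ and, just as in \eqref{eqn:pseudo}, $1 - \rho(z, z_m) \le \tfrac{1+\lambda}{1-\lambda}(1 - \mu_m)$. Feeding these into $-\log|B_n(z)| = \sum_{m\neq n}\log(1/\rho(z, z_m)) \le \eta^{-1}\sum_{m\neq n}(1 - \rho(z, z_m))$ (using $\log(1/x) \le (1-x)/x$) and then the separation-sum bound, I obtain $|B_n(z)| \ge c(\delta, \lambda) > 0$ with $c(\delta,\lambda) \to 1$ under the above choice of $\lambda$. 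In particular, on the sphere $\rho(z, z_n) = \lambda$ one has $|B(z)| = \lambda|B_n(z)| \ge \lambda\, c(\delta,\lambda) =: r(\delta)$, and $r(\delta) \to 1$. By connectedness, since $|B(z_n)| = 0 < r$ while $|B| \ge r$ on the bounding sphere, the component $V_n$ cannot cross it, so $V_n \subset \{\rho(\cdot, z_n) < \lambda\}$.

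It remains to show the level set has no other components, that is, that $|B(z)| \ge r$ for every $z$ lying outside all of the balls $\{\rho(\cdot, z_n) < \lambda\}$; this \emph{global} estimate is the main obstacle. For such $z$ each $\rho(z, z_n) \ge \lambda$, so $-\log|B(z)| = \sum_n \log(1/\rho(z,z_n)) \le (2\lambda^2)^{-1}\sum_n (1 - \rho(z,z_n)^2)$, and everything reduces to bounding $\sum_n(1 - \rho(z, z_n)^2) = (1-|z|^2)\sum_n \tfrac{1-|z_n|^2}{|1-\overline{z_n}z|^2}$ uniformly over such $z$ by a quantity tending to $0$ as $\delta \to 1$. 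The difficulty is that a crude Carleson-measure bound only yields a constant $C(\delta)$ that need not be small; the honest argument must exploit both that $z$ avoids every ball and the $\delta$-separation of the zeros. I would order the zeros by pseudohyperbolic distance from $z$ and use a packing argument: disjointness of the separating balls about the zeros forces $\rho(z, z_{(k)})$ to recede with $k$, so $1 - \rho(z, z_{(k)})^2$ decays geometrically at a rate governed by $\delta$, the nearest terms already being small because $\rho \ge \lambda$; summing the geometric tail gives a bound that tends to $0$ as $\delta \to 1$. This produces a threshold $r_1(\delta) \to 1$ with $|B| \ge r_1$ off the balls; taking $r$ to be the smaller of this and the sphere threshold (still tending to $1$) completes the proof, giving $\{|B| < r\} = \bigsqcup_n V_n$ with each $V_n \subset \{\rho(\cdot, z_n) < \lambda\}$ and both $\lambda(\delta), r(\delta) \to 1$.
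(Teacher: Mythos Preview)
The paper does not give its own proof of Hoffman's Lemma; it is quoted as a background result with citations to \cite{H} and \cite[p.~395]{G}. So there is nothing in the paper to compare your argument against, and what you have written is essentially the standard proof one finds in Garnett.

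Your local estimate is clean and correct: factoring $B=b_nB_n$, using $|B_n(z_n)|\ge\delta$ together with the strong form of the triangle inequality for $\rho$ to control $|B_n|$ on $\{\rho(\cdot,z_n)\le\lambda\}$, and then reading off $|B|\ge\lambda\,c(\delta,\lambda)$ on the bounding pseudohyperbolic circle, is exactly the right mechanism. Your choice $\lambda(\delta)=1-\sqrt{1-\delta}$ works, and the check that $\eta^{-1}\tfrac{1+\lambda}{1-\lambda}\log(1/\delta)\to 0$ is straightforward.

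For the global step your outline is right but the phrase ``$1-\rho(z,z_{(k)})^2$ decays geometrically'' is misleading and, taken literally, false: in hyperbolic geometry a $\delta$-separated set can still have $\asymp e^{R}$ points within hyperbolic distance $R$ of $z$, so the $k$-th nearest zero is only at hyperbolic distance $\gtrsim\log k$, giving $1-\rho^2\lesssim k^{-2}$ rather than geometric decay. What actually makes the sum small is the combination of that packing bound $N(R)\le C(\delta)e^{R}$ (with $C(\delta)$ nonincreasing in $\delta$) together with $1-\rho^2\le 4e^{-2\beta}$ and the fact that all terms start at $\beta\ge\ell:=\tanh^{-1}\lambda\to\infty$; summing over hyperbolic annuli then gives $\sum_n(1-\rho(z,z_n)^2)\le C'e^{-\ell}\to 0$. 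With that correction the argument closes and yields the required $r(\delta)\to 1$.
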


Let $M(H^\infty)$ denote the maximal ideal space of $H^\infty$ or the set of non-zero multiplicative linear functionals on $H^\infty$. Identifying points of $\mathbb{D}$ with point evaluation, we may think of $\mathbb{D}$ as contained in $M(H^\infty)$. Carleson's Corona Theorem tells us that $\mathbb{D}$ is dense in the space in the weak-$\ast$ topology.  The maximal ideal space breaks down into analytic disks called Gleason parts. These may be a single point, in which case we call them trivial, or they may be true analytic disks, in which case we call them nontrivial. It is a consequence of Hoffman's work that points in the closure of an interpolating sequence are nontrivial. (See \cite[Theorem 4.3]{H}.)

\begin{theorem}\label{thm:H1} 
\cite[Theorem 5.3]{H}
Let $B$ be a Blaschke product and let $m$ be a point of $M(H^\infty) \setminus \mathbb{D}$ for which $B(m) = 0$. Then either $B$ has a zero of infinite order at $m$ or else $m$ lies in the closure of an interpolating subsequence of the zero sequence of $B$. \end{theorem}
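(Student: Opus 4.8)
The plan is to pass to Hoffman's analytic disc through $m$ and reduce the dichotomy to the behaviour of an honest analytic function on $\mathbb{D}$. Fix a net $(z_\alpha)$ in $\mathbb{D}$ converging to $m$ in $M(H^\infty)$, write $\phi_{z_\alpha}(z) = \frac{z_\alpha - z}{1 - \overline{z_\alpha}z}$ for the associated involutive automorphisms, and let $L_m \colon \mathbb{D} \to M(H^\infty)$ be the Hoffman map obtained as a weak-$*$ limit of $f \mapsto f \circ \phi_{z_\alpha}$, so that $L_m(0) = m$ and $L_m(\mathbb{D})$ is the Gleason part of $m$. Set $\widetilde{B} := B \circ L_m$. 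Since each $B \circ \phi_{z_\alpha}$ is again a Blaschke product, $\widetilde{B}$ is a locally uniform limit of Blaschke products; in particular it is analytic on $\mathbb{D}$ with $|\widetilde{B}| \le 1$ and $\widetilde{B}(0) = B(m) = 0$. This already produces the dichotomy: either $\widetilde{B} \equiv 0$, which is the statement that $B$ vanishes on the whole part of $m$, i.e.\ to infinite order there, or $\widetilde{B} \not\equiv 0$ and has a zero of some finite order $k \ge 1$ at the origin.

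In the finite-order case I would localize and count zeros. Choose $\varepsilon \in (0,1)$ small enough that $\widetilde{B}$ has exactly $k$ zeros (with multiplicity) in $\{|z| \le \varepsilon\}$ and $|\widetilde{B}| \ge 2r > 0$ on $\{|z| = \varepsilon\}$. Because $B \circ \phi_{z_\alpha} \to \widetilde{B}$ uniformly on $\{|z| \le \varepsilon\}$, Hurwitz's theorem (equivalently the argument principle) guarantees that, for $z_\alpha$ sufficiently close to $m$, the product $B \circ \phi_{z_\alpha}$ also has exactly $k$ zeros in $\{|z| < \varepsilon\}$ and satisfies $|B \circ \phi_{z_\alpha}| \ge r$ on $\{|z| = \varepsilon\}$. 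Undoing the automorphism, this says that for all such $z_\alpha$ the product $B$ has at most $k$ zeros inside the pseudohyperbolic disc $\{w : \rho(w, z_\alpha) < \varepsilon\}$ while $|B| \ge r$ on its boundary $\rho(\cdot, z_\alpha) = \varepsilon$. In other words, near $m$ the zeros of $B$ have uniformly bounded local multiplicity and $B$ stays bounded away from $0$ at a fixed pseudohyperbolic distance from any point close to $m$.

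With this local picture I would extract the interpolating subsequence. Since $B(m) = 0$ with $m \notin \mathbb{D}$, infinitely many zeros of $B$ cluster at $m$; because each disc of radius $\varepsilon$ captures at most $k$ of them, a greedy selection produces a $\rho$-separated subsequence $(b_j)$ of zeros of $B$ with $b_j \to m$ (removing an $\eta$-disc around an already chosen zero discards at most $k$ candidates and so leaves infinitely many). The bound $|B| \ge r$ off the clusters controls the ``far'' factors of the associated subproduct $B_0$, while the bound $k$ controls the ``near'' factors, together yielding a uniform Carleson estimate $\inf_j (1 - |b_j|^2)\,|B_0'(b_j)| \ge \delta > 0$. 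By Carleson's theorem $(b_j)$ is an interpolating subsequence of the zeros of $B$, and by construction $m$ lies in its closure.

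The crux --- and the step I expect to be hardest --- is the last one: turning the net-level, local data (at most $k$ zeros per disc, a surviving lower bound $r$) into a single global separation constant $\delta > 0$ along an honest countable subsequence. This is precisely the quantitative heart of Hoffman's theory, the same circle of ideas underlying Hoffman's Lemma recalled above and the fact, noted before the statement, that points in the closure of an interpolating sequence are nontrivial. It is also worth observing that the infinite-order alternative is exactly the obstruction to this extraction: if for every $\varepsilon$ the lower bound $r$ degenerates, the zeros accumulate at $m$ with unbounded local multiplicity and $\widetilde{B} \equiv 0$.
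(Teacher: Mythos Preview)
The paper does not prove this statement: Theorem~\ref{thm:H1} is quoted verbatim from Hoffman \cite[Theorem~5.3]{H} and is used as a black box (together with \cite[Theorem~5.4]{H}) in the proof of Corollary~\ref{cor:H}. So there is no ``paper's own proof'' to compare against.

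That said, your sketch is very much in the spirit of Hoffman's original argument: pass to the analytic map $L_m$ on the Gleason part, look at $\widetilde{B}=B\circ L_m$, and split according to whether $\widetilde{B}\equiv 0$ or has a zero of finite order at $0$; in the latter case use Hurwitz/argument--principle to get uniform local zero counts and a lower bound for $|B|$ at fixed pseudohyperbolic distance, then extract a separated subsequence of zeros clustering at $m$. You correctly identify the one genuinely nontrivial step you have not carried out: converting ``at most $k$ zeros per pseudohyperbolic $\varepsilon$--disc near $m$, and $|B|\ge r$ on the $\varepsilon$--circles'' into a uniform Carleson lower bound $\inf_j(1-|b_j|^2)|B_0'(b_j)|\ge\delta$ for the subproduct $B_0$. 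Two things need care here. First, the lower bound $|B|\ge r$ is for the \emph{full} product $B$, not for the subproduct $B_0$, and the passage from one to the other requires controlling the factors you have discarded; Hoffman handles this by first showing that near $m$ the full zero set of $B$ already splits into a finite union of interpolating sequences (bounded local multiplicity is the key input), and only then selecting one. Second, your greedy extraction must be arranged so that the chosen $b_j$ actually cluster at $m$ in $M(H^\infty)$, which is easy but should be said (work along the net $(z_\alpha)$ and pick one zero in each disc $\rho(\,\cdot\,,z_\alpha)<\varepsilon$). With those two points filled in, your outline becomes Hoffman's proof.
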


In the same paper of Hoffman, \cite[Theorem 5.4]{H}, shows that an interpolating Blaschke product cannot have a zero of infinite order. Therefore, if $B$ is an interpolating Blaschke product and $B(m) = 0$, then $m$ must lie in the closure of the zero sequence of $B$.

\begin{theorem}[Hoffman's Theorem] \label{thm:H}
A necessary and sufficient condition that a point $m$ of the maximal ideal space lie in a nontrivial part is the following: If $S$ and $T$ are subsets of the disk $\mathbb{D}$ and if $m$ belongs to the closure of each set, then the hyperbolic distance from $S$ to $T$ is zero.
\end{theorem}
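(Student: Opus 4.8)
The plan is to reconstruct Hoffman's argument by working with the pseudohyperbolic metric extended to the maximal ideal space. For $m, n \in M(H^\infty)$ set $\rho(m,n) = \sup\{|\hat g(n)| : g \in H^\infty,\ \|g\|_\infty \le 1,\ \hat g(m) = 0\}$, so that the Gleason part of $m$ is $P(m) = \{n : \rho(m,n) < 1\}$ and $m$ is trivial exactly when $P(m) = \{m\}$. The one elementary ingredient I would record first is a semicontinuity estimate: if $s_\alpha \to m$ and $t_\alpha \to n$ are nets in $\mathbb{D}$, then applying the pseudohyperbolic Schwarz--Pick inequality to a competitor $g$ (with $\hat g(m)=0$, $\|g\|_\infty\le1$) gives $|\hat g(n)| = \lim_\alpha \rho(g(s_\alpha), g(t_\alpha)) \le \liminf_\alpha \rho(s_\alpha, t_\alpha)$, hence $\rho(m,n) \le \liminf_\alpha \rho(s_\alpha, t_\alpha)$. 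Writing (P) for the condition ``every pair $S,T\subseteq\mathbb{D}$ with $m\in\overline S\cap\overline T$ satisfies $\beta(S,T)=0$,'' and recalling $\beta(S,T)=0 \iff \inf\{\rho(s,t):s\in S,\,t\in T\}=0$, it suffices to prove the two contrapositive implications: (\mathrm{I}) if some $\rho$-separated pair clusters at $m$ then $m$ is trivial, and (\mathrm{II}) if $m$ is trivial then such a separated pair exists.

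For (\mathrm{I}), suppose $S,T$ cluster at $m$ with $\rho(s,t)\ge \rho_0>0$ for all $s\in S$, $t\in T$, and suppose toward a contradiction that $m$ is nontrivial. By Hoffman's analysis a nontrivial point lies in the closure of an interpolating sequence, so I would first extract from a net $s_\alpha\to m$ an interpolating subsequence $(s_{n})$ still clustering at $m$; Theorem~\ref{thm:H1} together with the fact that interpolating Blaschke products carry no infinite-order zeros makes this extraction legitimate. Hoffman's Lemma then surrounds $(s_n)$ by pairwise disjoint regions $V_n\subset\{\rho(\cdot,s_n)<\lambda\}$ on which the corresponding interpolating Blaschke product is small. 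The decisive step is to manufacture a single function $g\in H^\infty$, $\|g\|_\infty\le1$, with $\hat g(m)=0$ but $|g|$ bounded below on $T$: one interpolates values adapted to the $V_n$ (using Carleson interpolation across the separated regions) so that $g$ vanishes along the $S$-net while, because every point of $T$ is pseudohyperbolically $\rho_0$-far from $S$ and hence lands outside the $V_n$, $|g|\ge c(\rho_0)>0$ on $T$. Passing to the limit along a net $t_\beta\to m$ then forces $\hat g(m)=0$ and $|\hat g(m)|\ge c>0$ simultaneously, the desired contradiction; so $m$ is trivial.

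For (\mathrm{II}), assume $m$ is trivial and fix any net $z_\alpha\to m$. Form the Hoffman map $L_m(z)=\lim_\alpha \tau_{z_\alpha}(z)$ along a subnet, where $\tau_{z_\alpha}(z)=\frac{z_\alpha+z}{1+\overline{z_\alpha}z}$; triviality of $m$ means $L_m\equiv m$, so the translated net $w_\alpha:=\tau_{z_\alpha}(z_0)$ with $z_0=\rho_0\in(0,1)$ also converges to $m$, while $\rho(z_\alpha,w_\alpha)=\rho(0,z_0)=\rho_0$ for every $\alpha$ by M\"obius invariance. I would then thin the two nets to honest sequences $S=(z_{\alpha_n})$, $T=(w_{\alpha_n})$ that are \emph{uniformly} $\rho_0$-separated as sets (not merely index-by-index), choosing successive indices so far out toward $m$ that cross terms cannot approach; both sequences still cluster at $m$, so (P) fails.

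The step I expect to be the main obstacle is the uniform-separation construction inside (\mathrm{I}): upgrading the pointwise pseudohyperbolic separation of $S$ and $T$ in $\mathbb{D}$ to a genuine separation of their cluster sets in $M(H^\infty)$, that is, producing the single norm-one function which vanishes on the $S$-side yet stays large on the $T$-side. This is precisely where Carleson's interpolation theorem and Hoffman's Lemma are indispensable, since pointwise separation by itself does not survive the passage to the maximal ideal space; the parallel bookkeeping needed in (\mathrm{II}) to replace convergent nets by honestly separated sequences is a secondary, more technical difficulty.
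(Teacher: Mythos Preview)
The paper does not give a proof of this statement; Theorem~\ref{thm:H} is simply quoted from Hoffman \cite{H} and then applied to obtain Corollary~\ref{cor:H}. So there is no proof in the paper to compare yours against.

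On the merits of your sketch: direction~(I) has a genuine circularity. You assume $m$ is nontrivial and immediately invoke ``a nontrivial point lies in the closure of an interpolating sequence.'' In Hoffman's paper that fact is proved \emph{after} (and using) the hyperbolic--distance characterization you are trying to establish; the same goes for Theorem~\ref{thm:H1}, which you also call on. Even setting the circularity aside, the construction of $g$ is incomplete: Hoffman's Lemma produces regions $V_n\subset\{\rho(\,\cdot\,,s_n)<\lambda\}$ with $\lambda=\lambda(\delta)$ determined by the separation constant of the extracted interpolating sequence, not by the gap $\rho_0$ between $S$ and $T$. There is no reason $T$ must avoid the $V_n$, and hence no reason the associated Blaschke product (or any interpolant built from it) is bounded below on $T$.

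Direction~(II) is closer to correct. The semicontinuity estimate you record does show $L_m(\mathbb D)\subset P(m)$, so $L_m\equiv m$ when $m$ is trivial, and the translated net $w_\alpha=\tau_{z_\alpha}(\rho_0)$ genuinely clusters at $m$ with $\rho(z_\alpha,w_\alpha)=\rho_0$. What is missing is the passage from indexwise separation to setwise separation: you must exhibit subsets $S,T$ with $m\in\overline S\cap\overline T$ and $\inf_{s\in S,\,t\in T}\rho(s,t)>0$, and the thinning you propose (so that cross terms stay far apart) needs an argument that the thinned family still has $m$ in its closure. In a non-metrizable space like $M(H^\infty)$ this is not automatic for sequences and has to be carried out with nets or via a careful choice of basic neighborhoods of $m$.
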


As a result of Hoffman's theorem we show that, if $(a_j)$ is interpolating for $K_B^\infty$ and $(z_j)$ is interpolating for $K_C^\infty$ and the $\rho$ distance between the two sequences is positive, then (see Corollary \ref{cor:H}) $B$ is bounded below on $\{z_j\}$ and $C$ is bounded below on $\{a_j\}$. 
This is known, but for future use we isolate this as a corollary to Theorem~\ref{thm:H}.

\begin{corollary}\label{cor:H} Let $(a_j)$ and $(z_j)$ be two interpolating sequences for $H^\infty$ with corresponding Blaschke products $B$ and $C$, respectively. Suppose further that the $\rho$-distance between the two sequences satisfies \[\inf_{j, k}\rho(a_j, z_k) \ge \varepsilon > 0.\] Then there exists $\eta > 0$ such that 
\[\inf_j |C(a_j)| \ge \eta~\mbox{and}~\inf_j|B(z_j)| \ge \eta.\]
\end{corollary}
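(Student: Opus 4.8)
The plan is to argue by contradiction using Hoffman's Theorem (Theorem~\ref{thm:H}). Suppose the conclusion fails for, say, $C$ on $\{a_j\}$, so that $\inf_j |C(a_j)| = 0$. Then I would extract a subsequence $(a_{j_n})$ with $|C(a_{j_n})| \to 0$. Since $(a_j)$ is interpolating for $H^\infty$, it is separated and a Blaschke sequence, so I may pass to a weak-$*$ convergent subnet and obtain a point $m \in M(H^\infty)$ in the closure of $\{a_{j_n}\}$. Because $C$ is continuous on the maximal ideal space and $|C(a_{j_n})| \to 0$, we get $C(m) = 0$.

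Next I would locate $m$ relative to the zero sequence $(z_k)$ of $C$. Since $C$ is an interpolating Blaschke product, it has no zero of infinite order (as noted after Theorem~\ref{thm:H1}); hence by Theorem~\ref{thm:H1}, the point $m$ with $C(m) = 0$ must lie in the closure of the zero sequence of $C$, i.e.\ in the closure of $\{z_k\}$. At the same time $m$ lies in the closure of $\{a_{j_n}\} \subseteq \{a_j\}$. Thus $m$ belongs to the closure of both $S = \{a_j\}$ and $T = \{z_k\}$.

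Now I would invoke the nontriviality of the part containing $m$. Since $(a_j)$ is an interpolating sequence, every point in its closure lies in a nontrivial Gleason part (the consequence of Hoffman's work cited just before Theorem~\ref{thm:H1}); in particular $m$ is in a nontrivial part. By Hoffman's Theorem, since $m$ lies in the closure of both $S$ and $T$, the hyperbolic distance from $S$ to $T$ must be zero, that is, $\inf_{j,k}\beta(a_j, z_k) = 0$. But $\beta$ dominates a strictly increasing function of $\rho$, so $\inf_{j,k}\rho(a_j,z_k) = 0$, contradicting the hypothesis $\inf_{j,k}\rho(a_j,z_k)\ge \varepsilon > 0$. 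This proves $\inf_j |C(a_j)| \ge \eta_1 > 0$ for some $\eta_1$; the symmetric argument (swapping the roles of the two sequences) gives $\inf_j |B(z_j)| \ge \eta_2 > 0$, and taking $\eta = \min\{\eta_1, \eta_2\}$ finishes the proof.

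The main obstacle I expect is the careful passage from the numerical statement $|C(a_{j_n})| \to 0$ to the existence of a maximal-ideal-space point $m$ in the closure of $\{a_{j_n}\}$ at which $C$ actually vanishes, and then cleanly applying Theorem~\ref{thm:H1} to place $m$ in $\overline{\{z_k\}}$. One must make sure the subnet is chosen so that $a_{j_n} \to m$ in the weak-$*$ topology while simultaneously $C(a_{j_n})\to 0$; continuity of $C$ on $M(H^\infty)$ then forces $C(m)=0$. The rest is a matter of assembling the three cited Hoffman results correctly; the logical structure is the contradiction itself, so I would state it explicitly at the outset.
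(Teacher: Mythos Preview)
Your proof is correct and follows essentially the same route as the paper's: argue by contradiction, pick a cluster point $m$ in $M(H^\infty)$ of the offending subsequence, use continuity to get that the interpolating Blaschke product vanishes at $m$, invoke Theorem~\ref{thm:H1} (together with the fact that interpolating Blaschke products have no zeros of infinite order) to place $m$ in the closure of the other sequence, and then contradict the $\rho$-separation via Hoffman's Theorem. You are a bit more explicit than the paper about why $m$ lies in a nontrivial part and about the subnet extraction, but the argument is the same.
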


\begin{proof} If not, we may suppose that $\inf_j |B(z_j)| = 0$. Therefore, there exists a subsequence $(z_{j_k})$ of $(z_j)$ with $B(z_{j_k}) \to 0$. Let $m \in M(H^\infty) \setminus \mathbb{D}$ be a point in the closure of the set $\{z_{j_k}\}$. Then $B(m) = 0$. By the aforementioned work of Hoffman, $m$ lies in the closure of the zeros of $B$, namely the closure of $\{a_j\}$. On the other hand, $m$ lies in the closure of $\{z_j\}$,  by the choice of $m$. By Theorem~\ref{thm:H} the hyperbolic distance between the two sets must be zero. But since the pseudohyperbolic distance between the two is bounded away from zero, this is impossible.
\end{proof}

\section{Sequences that are far from each other}

In this section, we will consider unions of finitely many interpolating sequences defined in the following manner:
Let $(\alpha_j)$ and $(\beta_j)$ be sequences. Define $(\alpha_j) \cup (\beta_j)$ to be the sequence $(\gamma_j)$ where 
\begin{equation}\label{union}
\gamma_j = \left\{
	\begin{array}{ll}
		\alpha_j  & \mbox{if } j ~\mbox{is odd} \\
		\beta_j & \mbox{if } ~\mbox{is even}.
	\end{array}
\right.\end{equation}
For simplicity of presentation, we have defined the sequence $(\gamma_j)$ via this simple ``every-other'' interlacing.  It is clear that from the proof techniques that one could interlace the sequences $(\alpha_j)$ and $(\beta_j)$ in other ways.  Interlacing in other more exotic ways would necessitate the introduction of additional more complicated notation and to present the ideas most clearly we have chosen to use only these simple process described here.

In what follows, for a Blaschke product $B$ with zeros $(a_j)$, let \[b_j(z) = \frac{|a_j|}{-a_j}\frac{(z-a_j)}{(1 - \overline{a_j}z)}\] and let $B_j(z) = B(z)/b_j(z)$. (We interpret $\frac{|a_j|}{-a_j}=1$ if $a_j = 0$.)

If we wish to interpolate $(a_j) \cup (z_j)$ (as defined in equation \eqref{union}) to the sequence $(\alpha_j) \cup (\beta_j)$ and we know that  $(a_j)$ is interpolating for $K_B^\infty$ and $(z_j)$ is interpolating for $K_C^\infty$, and both $B(z_j)$ and $C(a_j)$ are bounded below over all $j$, then we can interpolate to $(\alpha_j^\prime):=(\alpha_j/C(a_j))$ and $(\beta_j^\prime):=(\beta_j/B(z_j))$ with $g_1 \in K_B^\infty$ and $g_2 \in K_C^\infty$, respectively. So $G:=C g_1 + B g_2 \in K_{BC}^\infty$ will do the interpolation. However, if we don't know that we can do the interpolation to every bounded sequence, then we need to combine Dyakonov and Hoffman's work to obtain a result. 
 
\begin{theorem}\label{thm:rhoseparated} Let $B$ and $C$ be interpolating Blaschke products with zeros $(a_j)$ and $(z_j)$ respectively,  satisfying $\displaystyle\inf_{j, k} \rho(z_j, a_k) \ge \varepsilon > 0$. If $(a_j)$ can be interpolated to $(\alpha_j)$ in $K_B^\infty$ and $(z_j)$ can be interpolated to $(\beta_j)$ in $K_C^\infty$, then $(x_j) := (a_j) \cup (z_j)$ can be interpolated to $(\gamma_j):= (\alpha_j) \cup (\beta_j)$ in $K_{BC}^\infty$. \end{theorem}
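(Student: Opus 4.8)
The plan is to reduce the theorem to two separate, symmetric interpolation problems for $B$ and $C$, following the mechanism sketched in the remark preceding the statement, and then to characterize each reduced problem through Dyakonov's criterion (Theorem~\ref{thm:D1}); Hoffman's contribution enters only through Corollary~\ref{cor:H}.

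First I would invoke Corollary~\ref{cor:H}: the $\rho$-separation hypothesis gives $\eta>0$ with $|C(a_j)|\ge\eta$ and $|B(z_j)|\ge\eta$ for all $j$. Since $(\alpha_j)=(f(a_j))$ for the given $f\in K_B^\infty$ and $(\beta_j)=(h(z_j))$ for the given $h\in K_C^\infty$, both are bounded, so the sequences $(\alpha_j/C(a_j))$ and $(\beta_j/B(z_j))$ are bounded. The reduction is that it suffices to produce $g_1\in K_B^\infty$ with $g_1(a_j)=\alpha_j/C(a_j)$ and $g_2\in K_C^\infty$ with $g_2(z_j)=\beta_j/B(z_j)$: then $G:=Cg_1+Bg_2$ lies in $K_{BC}^\infty$ (because $Cg_1$ and $Bg_2$ each do), while $G(a_j)=C(a_j)g_1(a_j)=\alpha_j$ and $G(z_j)=B(z_j)g_2(z_j)=\beta_j$, so $G$ interpolates $(\gamma_j)$ on $(x_j)$. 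Thus the whole theorem reduces to the claim that interpolability of $(\alpha_j)$ in $K_B^\infty$ survives multiplication by the bounded-below factor $1/C(a_j)$ (and the mirror statement for $C$).

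By Dyakonov's criterion (Theorem~\ref{thm:D1}) applied to $B$, producing $g_1$ is equivalent to the single estimate
\[\sup_k\left|\sum_j\frac{\alpha_j/C(a_j)}{B'(a_j)(1-a_j\overline{a_k})}\right|<\infty,\]
and this is the heart of the matter. It is not formal: the set of $K_B^\infty$-interpolable sequences is not invariant under arbitrary bounded multipliers, so the argument must use that $1/C(a_j)$ arises from an inner function whose zeros are $\rho$-separated from $(a_j)$. The route I would take is operator-theoretic. Multiplication of the interpolation values by $C(a_j)$ is realized by the compressed Toeplitz operator $S_C:=P_{K_B^2}M_C$ on $K_B^2$, which satisfies $(S_Cg)(a_j)=C(a_j)g(a_j)$ because the reproducing kernel $k_{a_j}$ lies in $K_B^2$. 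Its adjoint $S_C^*$ fixes each $k_{a_j}$, multiplying it by the scalar $\overline{C(a_j)}$; since the normalized reproducing kernels of the interpolating sequence $(a_j)$ form a Riesz basis of $K_B^2$, the operator $S_C^*$ is diagonalized in this basis with entries of modulus at least $\eta$, so $S_C$ is invertible. The function $g_1:=S_C^{-1}f$ then lies in $K_B^2$ and has the desired bounded values $g_1(a_j)=\alpha_j/C(a_j)$.

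The step I expect to be the main obstacle is upgrading $g_1$ from $K_B^2$ to $K_B^\infty$. What the operator argument delivers cheaply is an $L^2$-solution with the correct nodal values; boundedness is genuinely extra, since the Riesz projection defining $S_C$ is unbounded on $L^\infty$. The natural remedy is Cohn's machinery: if one can show $g_1\in K_{*B}$ (equivalently, that the $K_B^2$-solution lies in $BMOA$), then Lemma~\ref{lem:Cohn1} promotes the boundedness of $(g_1(a_j))$ to $g_1\in H^\infty$, while Lemma~\ref{lem:1} supplies the representation used to check membership. Verifying this $BMOA$ membership --- or, alternatively, estimating the displayed series directly, using $(1-|a_j|^2)|B'(a_j)|\ge\delta$, the bound $|C(a_j)|\ge\eta$, the separation \eqref{eqn:bounded}, and the given single-product bound to absorb the diagonal term and control the tail --- is where the real work lies. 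The argument for $g_2$ in $K_C^\infty$ is identical with the roles of $B$ and $C$ exchanged.
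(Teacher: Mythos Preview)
Your reduction to producing $g_1\in K_B^\infty$ with $g_1(a_j)=\alpha_j/C(a_j)$ is precisely the route the paper discusses --- and sets aside --- in the paragraph immediately preceding the theorem, on the grounds that it requires the stronger hypothesis that $(a_j)$ interpolate \emph{every} bounded target in $K_B^\infty$. Your operator argument does deliver $g_1=S_C^{-1}f\in K_B^2$ with the correct nodal values, but the step you yourself label ``the main obstacle'' --- promoting $g_1$ to $H^\infty$ --- is a genuine gap and you have not closed it. Lemma~\ref{lem:Cohn1} would finish it if $g_1\in K_{*B}$, but $S_C^{-1}$ (unlike its adjoint, which is diagonal on the reproducing kernels) has no evident reason to preserve the Cohn representation of Lemma~\ref{lem:1}; and the Dyakonov sum for $(\alpha_j/C(a_j))$ is not controlled by that for $(\alpha_j)$, since multiplying a non-absolutely bounded sum termwise by factors $1/C(a_j)$ of bounded modulus but varying argument can destroy boundedness. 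So as it stands the proposal is a plan with the hard step left open.

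The paper's argument is organized differently and avoids this auxiliary problem. It writes down an explicit Cohn series $g\in K_{*BC}$ (using that $BC$ is interpolating) with coefficients $\tilde\gamma_j$ built from $B_j(a_j)$ and $C_j(z_j)$ alone, forms the candidate $G=BC\,\overline{zg}\in K_{BC}^2$, and splits it as $G=G_1+G_2$ along the two zero sets. The point is that $G_1\in CH^2\cap K_{BC}^2$ and $G_1(a_j)=\alpha_jC(a_j)=(Cf)(a_j)$; since $G_1-Cf$ vanishes on $(a_j)$ and lies in $CH^2$, coprimality of $B$ and $C$ forces $G_1-Cf\in BCH^2\cap K_{BC}^2=\{0\}$, whence $G_1=Cf\in H^\infty$ directly from the hypothesis $f\in K_B^\infty$. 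Likewise $G_2=Bh$. Boundedness of $G$ is thus inherited from the \emph{given} interpolants $f$ and $h$ via this uniqueness identification in $K_{BC}^2$, rather than by solving the modified interpolation problem $g_1(a_j)=\alpha_j/C(a_j)$ in $K_B^\infty$ that your approach requires.
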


\begin{proof}  By Hoffman's theorem the sequence $(x_j)$ is interpolating for $H^\infty$ and $BC$ is interpolating. Note that 
\[K_B^\infty = H^\infty \cap B \overline{z H^\infty} = H^\infty \cap BC \overline{z C H^\infty} \subseteq K_{BC}^\infty\] and, similarly, $K_C^\infty \subseteq K_{BC}^\infty$. Corollary~\ref{cor:H} also implies that there exists $\delta > 0$ such that $\displaystyle\inf_j (\min\{|B(z_j)|, |C(a_j)|\}) \ge \delta > 0$. We define $\tilde{\gamma_j}$ for $j = 1, 2, 3, \ldots$, by  

\[
\tilde{\gamma}_{2j-1} := \frac{-\overline{a_j}}{|a_j|}\frac{\overline{\gamma_{2j-1}}}{\overline{B_j(a_j)}} ~\mbox{and}~ 
		\tilde{\gamma}_{2j} \\
		= \frac{-\overline{z_j}}{|z_j|} \frac{\overline{\gamma_{2j}}}{\overline{C_j(z_j)}}.
\]

Then $(\tilde{\gamma}_j) \in \ell_\infty$. Let $g$ be defined by 

\[g(z) = \sum_{j = 1}^\infty \tilde{\gamma}_{2j -1} \frac{(1 - |a_{j}|^2)}{1 - \overline{a_{j}}z} + \sum_{j = 1}^\infty \tilde{\gamma}_{2j} \frac{(1 - |z_j|^2)}{1 - \overline{z_j}z},\] 
and use Lemma~\ref{lem:1}, the fact that $BC$ is interpolating, and $K_B^2 \cup K_C^2 \subseteq K_{BC}^2$ to conclude that $g \in K_{*BC}$. In particular, $g \in H^2$.

Now for almost every $z \in \mathbb{T}$, we have
\begin{equation}\label{eqn:BC} B(z) C(z) \overline{zg(z)}=  \sum_j \overline{\tilde{\gamma}_{2j-1}}B(z)C(z) \overline{z} \frac{(1 - |a_{j}|^2)}{1 - {a_{j}}\overline{z}} +
\sum_j \overline{\tilde{\gamma}_{2j}}B(z) C(z) \overline{z} \frac{(1 - |z_{j}|^2)}{1 - {z_{j}}\overline{z}}.\end{equation}
For the first summand and almost every $z \in \mathbb{T}$, 

\begin{multline*}
B(z)C(z) \overline{z} \frac{(1 - |a_{j}|^2)}{1 - {a_{j}}\overline{z}} = \\ \frac{-|a_j|}{a_j} C(z) B_j(z) \left(\frac{z - a_j}{1 - \overline{a_{j}} z} \right)\overline{z}  \frac{(1 - |a_{j}|^2)}{1 - \overline{z}a_{j}} = \frac{-|a_j|}{a_j}C(z) B_j(z) \frac{(1 - |a_{j}|^2)}{1 - \overline{a_{j}}z}.
\end{multline*}

The summation converges in $H^2$ and each summand is in $H^2$, so the function also lies in $H^2$. Therefore 

\[BC\overline{zg(z)} \in \left(BC\overline{zH^2}\right) \bigcap H^2 = K_{BC}^2.\]
 
 The same computations, with appropriate adjustments, hold for the second summand. Therefore,
 
\begin{multline}\label{eqn:G}
G(z):=B(z)C(z)\overline{zg(z)}= \underbrace{\sum_{j=1}^\infty \overline{\tilde{\gamma}_{2j-1}}\left(\frac{-|a_j|}{a_j} B_j(z)C(z) 
\frac{(1 - |a_{j}|^2)}{1 - \overline{a_{j}}z}\right)}_{G_1} +\\
\underbrace{\sum_{j=1}^\infty \overline{\tilde{\gamma}_{2j}}\left(\frac{-|z_j|}{z_j}B(z) C_j(z) \frac{(1 - |z_{j}|^2)}{1 - \overline{z_{j}}z}\right)}_{G_2} \in K_{BC}^2.\end{multline}

Note that the equality 
\[G(z):= \sum_{j=1}^\infty \frac{-|a_j|}{a_j}\overline{\tilde{\gamma}_{2j-1}}B_j(z)C(z) \frac{(1 - |a_{j}|^2)}{1 - \overline{a_{j}}z} +\\
\sum_{j=1}^\infty \frac{-|z_j|}{z_j}\overline{\tilde{\gamma}_{2j}}B(z) C_j(z) \frac{(1 - |z_{j}|^2)}{1 - \overline{z_{j}}z}\] also holds in $\mathbb{D}$.

Now from \eqref{eqn:BC} $G_1 \in H^2 \cap BC(\overline{zH^2}) =  K_{BC}^2$, $G_1 \in CH^2$, and
\[G_1(a_j) = \alpha_j C(a_j).\] But we assume there exists $f \in K_B^\infty$ with $f(a_j) =  \alpha_j$  for all $j$, and therefore $(Cf)(a_j) = \alpha_j(C(a_j))$.  It follows that
\[G_1 - Cf = Bh~\mbox{for some}~h \in H^2.\] But since $G_1 \in CH^2$ and $B$ and $C$ have no common zeros, we see that $C$ must divide $h$. Thus, we have $G_1 - Cf = BCh_1$ for some $h_1 \in H^2$. Thus $G_1 - Cf \in BCH^2$. Note also that $f \in K_B^\infty$ implies that $Cf \in K_{BC}^\infty$. So, 
\[G_1 - Cf \in (BC)H^2 \cap K_{BC}^2 = \{0\}.\] Therefore, $G_1 = 
Cf \in H^\infty$.   
 The same computations show that $G_2 \in H^\infty$. Therefore $G = G_1 + G_2 \in H^\infty \cap K_{BC}^2$, which implies the result.
\end{proof}

From Theorem~\ref{thm:D1} we have the following:

\begin{corollary} Let $B$, $C$, $(a_j)$, and $(z_j)$ be as in Theorem~\ref{thm:rhoseparated} and let $(x_j) = (a_j) \cup (z_j)$, where $(a_j)$ and $(z_j)$ are the zeros of $B$ and $C$, respectively. If
\[\sup_k \left|\sum_j \frac{\alpha_j}{B^\prime(a_j)(1 - a_j \overline{a_k})} \right| < \infty ~\mbox{and}~ 
 \sup_k \left|\sum_j \frac{\beta_j}{C^\prime(z_j)(1 - z_j \overline{z_k})} \right| < \infty,\]

\[ ~\mbox{then}~ \sup_k \left|\sum_j \frac{\gamma_j}{(BC)^\prime(\alpha_j)(1 - \alpha_j \overline{\alpha_k})} \right| < \infty\]
where $(\gamma_j)=(\alpha_j)\cup(\beta_j)$.
\end{corollary}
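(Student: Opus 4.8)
The plan is to read this corollary as a purely formal consequence of Dyakonov's Theorem~\ref{thm:D1} together with Theorem~\ref{thm:rhoseparated}, using the former twice to translate between the concrete series conditions and the abstract interpolation statements, and the latter once to pass from the individual sequences to their union. Since Theorem~\ref{thm:D1} is stated for $\ell_\infty$ data, I would take $(\alpha_j),(\beta_j)\in\ell_\infty$ as part of the standing data inherited from Theorem~\ref{thm:rhoseparated}.

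First I would observe that the first displayed hypothesis is \emph{verbatim} the condition \eqref{eqn:D} of Theorem~\ref{thm:D1} for the interpolating Blaschke product $B$, its zero sequence $(a_j)$, and the data $(\alpha_j)$. Hence the sufficiency half of Theorem~\ref{thm:D1} produces a function $f \in K_B^\infty$ with $f(a_j) = \alpha_j$ for all $j$; applying the same reasoning to $C$, $(z_j)$, and $(\beta_j)$ via the second hypothesis produces $g \in K_C^\infty$ with $g(z_j) = \beta_j$ for all $j$. In other words, the two series bounds are exactly the interpolation hypotheses of Theorem~\ref{thm:rhoseparated}. Since $B$ and $C$ are $\rho$-separated with $\inf_{j,k}\rho(z_j,a_k)\ge\varepsilon>0$, Theorem~\ref{thm:rhoseparated} then applies and yields $G \in K_{BC}^\infty$ with $G(x_j)=\gamma_j$ for all $j$, where $(x_j)=(a_j)\cup(z_j)$ is the zero sequence of $BC$ and $(\gamma_j)=(\alpha_j)\cup(\beta_j)$. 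As $G\in K_{BC}^\infty\subseteq H^\infty$, the data $(\gamma_j)=(G(x_j))$ is automatically bounded.

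Finally I would invoke the necessity half of Theorem~\ref{thm:D1}, this time for the product $BC$. The point that licenses this step is that $BC$ is itself an interpolating Blaschke product, which was already established inside the proof of Theorem~\ref{thm:rhoseparated} (via Hoffman's results and the $\rho$-separation), so no new work is needed. Applying necessity to the interpolating Blaschke product $BC$, its zero sequence $(x_j)$, and the bounded data $(\gamma_j)$ that $G$ interpolates gives
\[\sup_k \left|\sum_j \frac{\gamma_j}{(BC)^\prime(x_j)(1 - x_j \overline{x_k})}\right| < \infty,\]
which is the asserted bound; here the nodes in the denominator are the zeros $x_j$ of $BC$ (the symbol $\alpha_j$ in the printed statement is a slip for $x_j$).

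I do not expect a substantive analytic obstacle, since the corollary is a direct chaining of two cited results rather than a new estimate. The one point worth emphasizing is \emph{why} Theorem~\ref{thm:rhoseparated} is genuinely needed: one cannot prove the $BC$-series bound by algebraically combining the two given $B$- and $C$-series, because the Dyakonov series for $BC$ at a node $x_k$ mixes the $a$- and $z$-contributions through the cross terms $(1 - a_j\overline{z_k})$ and $(1 - z_j\overline{a_k})$, which appear in neither hypothesis. Routing through the function-theoretic interpolation of Theorem~\ref{thm:rhoseparated} is precisely what sidesteps this. Beyond that, the only care required is bookkeeping: confirming that the interlaced node sequence $(x_j)$ and interlaced data $(\gamma_j)$ are matched so that the displayed series is indeed \eqref{eqn:D} for $BC$, and that $BC$ is interpolating so that Theorem~\ref{thm:D1} is available for the product. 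Both are supplied by the earlier work, so the proof amounts to citing Theorem~\ref{thm:D1} (sufficiency), Theorem~\ref{thm:rhoseparated}, and Theorem~\ref{thm:D1} (necessity) in turn.
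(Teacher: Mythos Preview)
Your proposal is correct and matches the paper's approach exactly: the paper presents this corollary as an immediate consequence of Theorem~\ref{thm:D1} applied before and after Theorem~\ref{thm:rhoseparated}, which is precisely the chain of implications you spell out. Your observation that the printed $\alpha_j$ in the conclusion should be the node $x_j$ is also correct.
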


\section{Sequences that are near each other}

In the introduction to the paper, we mentioned (see Proposition~\ref{prop:near}) that if $(a_n)$ is an interpolating sequence for $H^\infty$ and $(z_n)$ is a $\rho$-separated sequence with $\rho(a_n, z_n) < 1 - \varepsilon < 1$ for all $n$, then $(z_n)$ is interpolating for $H^\infty$. Here we consider the same result for $K_B^\infty$.

 \begin{proposition}\label{prop:C} If $(a_n)$ is interpolating for $K_B^\infty$, then there is a constant $M$ such that  $\|f\|_\infty \le M\|(f(a_n))\|_\infty$ for every $f \in K_B^\infty$.  \end{proposition}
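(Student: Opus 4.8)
The plan is to read the desired inequality as the assertion that the evaluation (restriction) map
\[
R : K_B^\infty \to \ell_\infty, \qquad Rf = (f(a_n)),
\]
is bounded below, and to obtain this from the bounded inverse theorem once $R$ is recognized as a continuous bijection between Banach spaces. First I would record that $R$ is well defined and contractive: every $f \in K_B^\infty$ lies in $H^\infty$, so $\|Rf\|_\infty = \sup_n |f(a_n)| \le \|f\|_\infty$, i.e.\ $\|R\| \le 1$. The standing hypothesis that $(a_n)$ is interpolating for $K_B^\infty$ says precisely that $R$ is surjective onto $\ell_\infty$.

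Next I would verify that $K_B^\infty$ is complete in the $H^\infty$ norm, that is, closed in $H^\infty$. Writing $K_B^\infty = H^\infty \cap K_B^2$ (which follows from $H^\infty \subseteq H^2$ and the definitions of $K_B^\infty$ and $K_B^2$) and using $\|\cdot\|_2 \le \|\cdot\|_\infty$, any $H^\infty$-norm Cauchy sequence in $K_B^\infty$ converges in $H^\infty$ to some $f$ and simultaneously converges in $H^2$; since $K_B^2 = H^2 \ominus BH^2$ is closed in $H^2$, the limit lies in $K_B^2 \cap H^\infty = K_B^\infty$. Thus $K_B^\infty$ is a Banach space, and so is $\ell_\infty$.

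The one step carrying real content is the injectivity of $R$, which I expect to be the main (if modest) obstacle. Suppose $f \in K_B^\infty$ with $f(a_n) = 0$ for all $n$. Because $(a_n)$ is an interpolating sequence its points are distinct and are exactly the simple zeros of $B$, so $f$ vanishes at every zero of $B$; hence $f/B$ extends analytically across the $a_n$ and, being bounded by $\|f\|_\infty$ in modulus on $\mathbb{T}$, satisfies $f/B \in H^\infty$, i.e.\ $f \in BH^\infty \subseteq BH^2$. But $f \in K_B^\infty \subseteq K_B^2 = H^2 \ominus BH^2$, and $(H^2 \ominus BH^2) \cap BH^2 = \{0\}$, so $f = 0$. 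This is also exactly what makes the claimed inequality plausible at all, since a nonzero kernel element would force the right-hand side to vanish while the left-hand side does not.

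Finally, $R$ is a continuous linear bijection between the Banach spaces $K_B^\infty$ and $\ell_\infty$, so by the bounded inverse theorem $R^{-1}$ is bounded. Taking $M := \|R^{-1}\|$ then gives
\[
\|f\|_\infty = \|R^{-1}(Rf)\|_\infty \le M\,\|(f(a_n))\|_\infty
\]
for every $f \in K_B^\infty$, as required.
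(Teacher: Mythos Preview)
Your proof is correct and follows essentially the same approach as the paper: both define the restriction map $K_B^\infty \to \ell_\infty$, show it is a bounded linear bijection (surjectivity from the hypothesis, injectivity from $K_B^\infty \cap BH^\infty = \{0\}$), and invoke the bounded inverse theorem. Your version is in fact slightly more detailed, since you explicitly verify that $K_B^\infty$ is a Banach space and spell out why $f/B \in H^\infty$, points the paper leaves implicit.
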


\begin{proof} Define $T: K_B^\infty \to \ell_\infty$ by $T(f) = (f(a_n))$. Then $T$ is a bounded linear operator that maps surjectively onto $\ell_\infty$. Note that $T$ is also injective, because $T(f) = T(g)$ implies $f - g \in BH^\infty$. But $f-g \in K_B^\infty \cap BH^\infty$ implies that $f = g$. The desired result now follows from the open mapping theorem (or, more specifically, the bounded inverse theorem).
\end{proof}

We now prove that when points in an interpolating sequence for $K_B^\infty$ can be moved pseudohyperbolically, as long as they are not moved too far, the new sequence will be interpolating for $K_B^\infty$ if the original was. 

\begin{theorem}\label{thm:nearby} Let $B$ be a Blaschke product and suppose that its zero sequence, $(a_n)$, is an interpolating sequence for $K_B^\infty$. Let $M$ be the constant in Proposition~\ref{prop:C}, and suppose that $(z_n)$ is a sequence of distinct points with $\rho(a_n, z_n) < 1 - \varepsilon < 1/(2M)$. Then $(z_n)$ is interpolating for $K_B^\infty$.
\end{theorem}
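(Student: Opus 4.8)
The plan is to recast interpolation as the surjectivity of an evaluation operator and to obtain it by perturbing the corresponding operator for the sequence $(a_n)$, which is invertible by Proposition~\ref{prop:C}. To this end I would define two bounded operators $T, S : K_B^\infty \to \ell_\infty$ by $T(f) = (f(a_n))$ and $S(f) = (f(z_n))$; both are bounded (with norm $\le 1$) since $a_n, z_n \in \mathbb{D}$. Because $(a_n)$ is interpolating for $K_B^\infty$, the operator $T$ is a bijection onto $\ell_\infty$, and Proposition~\ref{prop:C} gives the quantitative bound $\|T^{-1}\| \le M$. The goal is then to show that $S$ is onto, since that is exactly the statement that $(z_n)$ is interpolating for $K_B^\infty$; injectivity of $S$ will fall out of the same argument and shows the interpolant in $K_B^\infty$ is unique.

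First I would bound $\|T - S\|$ by a Schwarz--Pick estimate. For $f \in K_B^\infty$ with $\|f\|_\infty \le 1$, the Schwarz--Pick inequality gives $\rho(f(a_n), f(z_n)) \le \rho(a_n, z_n)$, so from the definition of $\rho$,
\[
|f(a_n) - f(z_n)| = \rho\bigl(f(a_n), f(z_n)\bigr)\,\bigl|1 - \overline{f(z_n)}\,f(a_n)\bigr| \le 2\,\rho(a_n, z_n),
\]
using $|1 - \overline{f(z_n)}f(a_n)| \le 2$. Homogenizing in $\|f\|_\infty$ and taking the supremum over $n$ yields $\|(T - S)f\|_\infty \le 2\bigl(\sup_n \rho(a_n, z_n)\bigr)\|f\|_\infty \le 2(1-\varepsilon)\|f\|_\infty$. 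The hypothesis $1 - \varepsilon < 1/(2M)$ is exactly what makes this decisive: it forces $\|T - S\| \le 2(1-\varepsilon) < 1/M$.

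Next I would run a Neumann series argument. Setting $R := T^{-1}(T - S) : K_B^\infty \to K_B^\infty$, the two previous bounds combine to give $\|R\| \le \|T^{-1}\|\,\|T - S\| < M \cdot (1/M) = 1$. Hence $T^{-1}S = I - R$ is invertible on the Banach space $K_B^\infty$, so $S = T(I - R)$ is a composition of invertible maps and is therefore itself a bijection $K_B^\infty \to \ell_\infty$. In particular $S$ is onto: for every $(\beta_n) \in \ell_\infty$ there is $g \in K_B^\infty$ with $g(z_n) = \beta_n$, which is the desired conclusion.

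The only step demanding real care is the estimate on $\|T - S\|$. The constant $2$ arising from $|1 - \overline{f(z_n)}f(a_n)| \le 2$ must line up against the $2M$ appearing in the hypothesis, and the pointwise bound must be uniform in $n$ --- this is precisely why the hypothesis posits a single $\varepsilon$ with $\rho(a_n, z_n) < 1 - \varepsilon$ valid for all $n$, rather than merely $\rho(a_n, z_n) < 1$ termwise. Once the uniform inequality $\|T - S\| < 1/M$ is secured, the remainder is the standard principle that a sufficiently small perturbation of an invertible operator between Banach spaces stays invertible, and no further structural input about $K_B^\infty$ is needed.
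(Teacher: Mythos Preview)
Your proof is correct and is essentially the paper's argument recast in operator-theoretic language: the paper's iterative correction scheme $f=\sum_{m\ge 0} f_m$ is precisely your Neumann series $S^{-1}=\bigl(\sum_{k\ge 0} R^k\bigr)T^{-1}$ written out term by term, with the same Schwarz--Pick estimate supplying the contraction bound $\|R\|\le 2M(1-\varepsilon)<1$. Your packaging is cleaner, but the underlying mechanism is identical.
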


\begin{proof}  Without loss of generality we may assume $M > 1$. Let $(\alpha_n) \in \ell_\infty$. Choose $f_0 \in K_B^\infty$ with $f_0(a_n) = \alpha_n$ for all $n$. If necessary, divide $(\alpha_n)$ by a constant to assume that we can choose $f_0$ with norm at most one. Then, by Schwarz's lemma, for all $n$ we have \[\rho(f_0(z_n), f_0(a_n)) \le \rho(z_n, a_n).\] Thus,
\[|f_0(z_n) - f_0(a_n)| \le 2\rho(a_n, z_n),\] for all $n$.

 So, using our assumptions, for all $n$ we have
\[|f_0(z_n) - f_0(a_n)| \le 2(1 - \varepsilon).\]
Now $(a_n)$ is interpolating for $K_B^\infty$, so we may choose $f_1\in K_B^\infty$ so that $f_1(a_n) = f_0(a_n) - f_0(z_n)$ for all $n$.  By Proposition~\ref{prop:C}, we know that \[\|f_1\|_\infty \le M \|(f_1(a_n))\|_\infty \le 2M(1-\varepsilon).\] Therefore, by Schwarz's lemma, 
\[\left|\left(\frac{1}{2M(1-\varepsilon)}\right)f_1(z_n) - \left(\frac{1}{2M(1-\varepsilon)}\right)f_1(a_n)\right| \le 2\rho(a_n, z_n).\] Consequently, for all $n$ we have
\[|f_1(z_n) + f_0(z_n) - f_0(a_n)| = |f_1(z_n) - f_1(a_n)| \le 2^2 M (1 - \varepsilon)^2.\]
Now we choose $f_2 \in K_B^\infty$ with 
\[\|f_2\| \le M\|(f_1(z_n) - f_1(a_n))\|_\infty\]
and
\[f_2(a_n) = -(f_1(z_n) - f_1(a_n)) = f_0(a_n) - (f_1(z_n) + f_0(z_n)).\]
Therefore
\[\|f_2\| \le 2^2 M^2 (1 - \varepsilon)^2~\mbox{and}~f_2(a_n) = -(f_1(z_n) + f_0(z_n) - f_0(a_n)).\]
Now by Schwarz's lemma we have 
\[\left|f_2(z_n)/\|f_2\| - f_2(a_n)/\|f_2\|\right| \le 2  \rho(a_n, z_n) \le 2 (1 - \varepsilon),\] and consequently
\[|f_2(z_n) + f_1(z_n) + f_0(z_n) - f_0(a_n)| = |f_2(z_n) - f_2(a_n)|  \le 2^3 M^2 (1 - \varepsilon)^3.\] 
Continuing in this way, we assume we have chosen $f_0, \ldots, f_m \in K_B^\infty$ with
\[|f_m(z_n) + \cdots + f_0(z_n) - \alpha_n| \le 2^{m+1}M^m (1 - \varepsilon)^{m+1}~\mbox{for all}~ n\] and 
\[\|f_m\| \le 2^m M^m (1 - \varepsilon)^{m}.\]  We choose $f_{m+1} \in K_B^\infty$ with 
\[f_{m+1}(a_n) = -(f_m(z_n) + \cdots + f_0(z_n) - \alpha_n) ~\mbox{and}~ \|f_{m+1}\| \le 2^{m+1}M^{m+1}(1 - \varepsilon)^{m+1}.\] Now we have chosen $\varepsilon$ so that $(1 - \varepsilon) < 1/(2M)$ and $\|f_{m+1}\| \le \left(2M(1 - \varepsilon)\right)^{m+1}$. Letting $f = \sum_{j = 0}^\infty f_j$ we obtain $f \in K_B^\infty$ with the property that for each $n$

\begin{multline*}
|f(z_n) - \alpha_n| = \lim_m |f_m(z_n) + \cdots + f_1(z_n) + f_0(z_n) - f_0(a_n)| \le \\
\lim_m 2^{m+1} M^m(1 - \varepsilon)^{m+1} = 0. \end{multline*}
Thus $f \in K_B^\infty$ and $f$ does the interpolation.

\end{proof}

\section{Frostman Blaschke products and sequences that are near each other}

Tolokonnikov \cite{T1} showed that Frostman Blaschke products are always a finite product of interpolating Blaschke products, \cite{T}. In view of this, if we start with two sequences $(a_n)$ and $(z_n)$ with $\rho(a_n, z_n) \le \lambda < 1$ for all $n$ and $(a_n)$ a Frostman sequence, then we can write $(a_n)$ as a finite union of interpolating sequences and, as long as $(z_n)$ is $\rho$-separated, the corresponding subsequences of $(z_n)$ will also be interpolating, by Proposition~\ref{prop:near}. For this reason, we can reduce our discussion to Frostman sequences that are interpolating for $H^\infty$.

\begin{proposition}\label{prop} Let $(a_n)_{n \in \mathbb{N}}$ be a sequence of points in $\mathbb{D}$. If $N$ is an integer for which $(a_n)_{n > N}$ is a Frostman sequence, then $(a_n)$ is a Frostman sequence.\end{proposition}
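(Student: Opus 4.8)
The plan is to exploit the structure of the Frostman condition \eqref{eqn:fst}: it is a supremum over $\zeta \in \mathbb{T}$ of a sum of \emph{nonnegative} terms, and passing from the tail $(a_n)_{n > N}$ to the full sequence merely reinstates the finitely many summands indexed by $j = 1, \ldots, N$. First I would split the defining sum as
\[
\sum_{j=1}^\infty \frac{1 - |a_j|}{|\zeta - a_j|} = \sum_{j=1}^N \frac{1 - |a_j|}{|\zeta - a_j|} + \sum_{j > N} \frac{1 - |a_j|}{|\zeta - a_j|},
\]
where the second sum is bounded uniformly in $\zeta$ by hypothesis, since $(a_n)_{n > N}$ is Frostman. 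Everything then reduces to controlling the first, finite sum uniformly in $\zeta \in \mathbb{T}$.

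The key elementary estimate is that for any $a \in \mathbb{D}$ and any $\zeta \in \mathbb{T}$ the reverse triangle inequality gives $|\zeta - a| \ge |\zeta| - |a| = 1 - |a|$, so that each individual summand satisfies
\[
\frac{1 - |a|}{|\zeta - a|} \le 1
\]
regardless of $\zeta$. Applying this to the indices $j = 1, \ldots, N$ bounds the first sum by $N$ for every $\zeta$. Adding the two bounds and taking the supremum over $\zeta$ yields
\[
\sup_{\zeta \in \mathbb{T}} \sum_{j=1}^\infty \frac{1 - |a_j|}{|\zeta - a_j|} \le N + \sup_{\zeta \in \mathbb{T}} \sum_{j > N} \frac{1 - |a_j|}{|\zeta - a_j|} < \infty,
\]
which is precisely the Frostman condition \eqref{eqn:fst} for $(a_n)$.

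There is no genuine obstacle here: the entire argument hinges on the single observation that every summand $\frac{1-|a_j|}{|\zeta - a_j|}$ is bounded by $1$ uniformly in $\zeta$, so that enlarging a Frostman sequence by finitely many points of $\mathbb{D}$ increases the relevant supremum by at most the number of added points. The only point requiring any care is to note that the $a_j$ lie in the \emph{open} disk, so $1 - |a_j| > 0$ and the uniform bound on each term is meaningful (and the case $a_j = 0$ is covered directly, giving the value $1$).
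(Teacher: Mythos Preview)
Your proof is correct and follows the same decomposition as the paper: split off the finitely many terms $j=1,\ldots,N$ and show their sum is bounded uniformly in $\zeta$. The only difference is that the paper bounds the finite sum by noting it is a continuous function of $\zeta$ on the compact set $\mathbb{T}$, whereas you give the sharper explicit bound $\frac{1-|a_j|}{|\zeta-a_j|}\le 1$ via the reverse triangle inequality; your version is slightly more elementary and yields the clean constant $N$.
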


\begin{proof} Consider the function $F(\zeta): = \sum_{j = 1}^{N} \frac{1 - |a_j|^2}{|a_j - \zeta|}$ on the unit circle. Then $F$ is continuous and therefore bounded. Thus, $\displaystyle\sup_{\zeta \in \mathbb{T}}  \sum_{j = 1}^{N} \frac{1 - |a_j|^2}{|a_j - \zeta|}$ is finite and the result follows.
\end{proof}

We turn to the main theorem of this section, which says that if we begin moving points of a Frostman sequence, as long as we don't move the sequence too far pseudohyperbolically, the new sequence will be interpolating for $K_C^\infty$, where $C$ is the Blaschke product corresponding to the new sequence.

\begin{theorem}\label{movingzeros} Let $\varepsilon > 0$. Let $(a_n)$ be an interpolating Frostman sequence and let $(z_n)$ be  a $\rho$-separated sequence with  $\rho(a_n, z_n) \le 1 - \varepsilon$ for all $n$. Then $(z_n)$ is a Frostman sequence.\end{theorem}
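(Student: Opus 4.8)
The plan is to reduce the statement to W.~Cohn's characterization of Frostman interpolating sequences recorded in \eqref{eqn:Cohn}, and then to transport that condition from $(a_n)$ to $(z_n)$ by a uniform term-by-term pseudohyperbolic comparison.

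First I would observe that $(z_n)$ is itself an interpolating sequence for $H^\infty$. This is precisely Proposition~\ref{prop:near}, applied to the interpolating sequence $(a_n)$ and the $\rho$-separated sequence $(z_n)$ with $\rho(a_n,z_n)\le 1-\varepsilon<1$; this is the one place the separation hypothesis enters. Since $(a_n)$ is interpolating and Frostman, Cohn's criterion \eqref{eqn:Cohn} gives $\sup_n\sum_k \frac{1-|a_k|}{|1-\overline{a_k}a_n|}<\infty$. Because $(z_n)$ is now known to be interpolating, the same criterion will deliver the conclusion once I establish the analogous bound $\sup_n\sum_k \frac{1-|z_k|}{|1-\overline{z_k}z_n|}<\infty$.

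The heart of the argument is a comparison, uniform in $k$ and $n$, of the form
\[\frac{1-|z_k|}{|1-\overline{z_k}z_n|}\le C(\varepsilon)\,\frac{1-|a_k|}{|1-\overline{a_k}a_n|}.\]
This rests on two comparabilities with constants depending only on $\varepsilon$. The first, $1-|z_k|\asymp 1-|a_k|$ (and likewise for the index $n$), is the standard consequence of $\rho(a_n,z_n)\le 1-\varepsilon$, of the type collected in \eqref{eqn:inequality1}. The second, $|1-\overline{z_k}z_n|\asymp|1-\overline{a_k}a_n|$, I would obtain from the identity $|1-\overline{z}w|^2=(1-|z|^2)(1-|w|^2)/(1-\rho(z,w)^2)$: writing the ratio $|1-\overline{z_k}z_n|^2/|1-\overline{a_k}a_n|^2$ as the product of $\frac{1-|z_k|^2}{1-|a_k|^2}$, $\frac{1-|z_n|^2}{1-|a_n|^2}$ (both controlled by the first comparability) and $\frac{1-\rho(a_k,a_n)^2}{1-\rho(z_k,z_n)^2}$, it remains to bound the last factor. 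Here \eqref{eqn:pseudo}, applied with $\lambda=1-\varepsilon$ and in both directions (using $\rho(a_k,z_k),\rho(a_n,z_n)\le 1-\varepsilon$ together with the symmetry of $\rho$), yields $1-\rho(a_k,a_n)\asymp 1-\rho(z_k,z_n)$, hence $1-\rho(a_k,a_n)^2\asymp 1-\rho(z_k,z_n)^2$.

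With the comparison in hand, I would sum over $k$ and take the supremum over $n$ to get $\sup_n\sum_k \frac{1-|z_k|}{|1-\overline{z_k}z_n|}\le C(\varepsilon)\sup_n\sum_k \frac{1-|a_k|}{|1-\overline{a_k}a_n|}<\infty$, and then invoke \eqref{eqn:Cohn} for the interpolating sequence $(z_n)$ to conclude that it is Frostman. The main obstacle is the second comparability $|1-\overline{z_k}z_n|\asymp|1-\overline{a_k}a_n|$: it couples two distinct index pairs and must hold uniformly in both $k$ and $n$, and the only reason it does is that \eqref{eqn:pseudo} converts the hypotheses $\rho(a_k,z_k)\le 1-\varepsilon$ and $\rho(a_n,z_n)\le 1-\varepsilon$ into the comparability of the separations $\rho(a_k,a_n)$ and $\rho(z_k,z_n)$. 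Everything else is either a direct citation or a routine estimate. Alternatively, one can bypass \eqref{eqn:Cohn} entirely and compare the boundary sums in \eqref{eqn:fst} directly, using Harnack's inequality for the Poisson kernel to get $|\zeta-z_k|\asymp|\zeta-a_k|$ uniformly for $\zeta\in\mathbb{T}$; this avoids even needing interpolation, but is less in the spirit of the preliminaries assembled above.
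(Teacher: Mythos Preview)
Your argument is correct. Interestingly, the paper explicitly acknowledges the route you chose: after its own proof it remarks that ``the proof can be slightly shortened by using the characterization of Frostman sequences due to Cohn that appears in~\eqref{eqn:Cohn},'' but opts instead to verify the boundary Frostman condition~\eqref{eqn:fst} directly. Concretely, the paper establishes the same two comparabilities $1-|z_n|^2\asymp 1-|a_n|^2$ and $1-\rho(a_j,a_k)\asymp 1-\rho(z_j,z_k)$ (the latter via~\eqref{eqn:pseudo}), but then pushes the denominator comparison all the way to $\zeta\in\mathbb{T}$: using the strong triangle inequality for $\rho$ it shows $\frac{1}{|1-\overline{a_j}z|}\gtrsim\frac{1}{|1-\overline{z_j}z|}$ for all $z\in\mathbb{D}$, and takes a radial limit to obtain $\frac{1-|a_j|^2}{|1-\overline{a_j}\zeta|}\gtrsim\frac{1-|z_j|^2}{|1-\overline{z_j}\zeta|}$ for $\zeta\in\mathbb{T}$. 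That is essentially your ``alternative'' Harnack-type route at the end. What Cohn's criterion buys you is that you only need the denominator comparison at the interior points $a_n,z_n$, so the passage to the boundary is absorbed into the cited theorem; what the paper's direct route buys is that, once the boundary comparison is in hand, neither the interpolation of $(z_n)$ nor Cohn's result is needed to conclude. One small imprecision: the comparability $1-|z_k|\asymp 1-|a_k|$ does not literally follow from~\eqref{eqn:inequality1} as stated (that display assumes a uniform bound on $\beta(a_j,z_k)$ for \emph{all} pairs $j,k$), but it is of course an immediate consequence of $\rho(a_k,z_k)\le 1-\varepsilon$, and the paper derives it separately from the Euclidean description of pseudohyperbolic disks.
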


\begin{proof} Using Proposition~\ref{prop:near} and \eqref{eqn:pseudo} we know that for all $j$ and $k$,
\[1 - \rho(a_j, a_k) \le \left(\frac{1 + (1-\varepsilon)}{1 - (1 - \varepsilon)}\right)^2 \left(1 - \rho(z_j, z_k)\right).\]  
Since $(1 + \rho(a_j, a_k)) \le 2$ and $1 \le 1 + \rho(z_j, z_k)$, it follows that 

\[(1+ \rho(a_j, a_k))(1 - \rho(a_j, a_k)) \le 2 \left(\frac{1 + (1-\varepsilon)}{1 - (1 - \varepsilon)}\right)^2 (1 + \rho(z_j, z_k))\left(1 - \rho(z_j, z_k)\right).\] A computation shows that
\[1 - \rho^2(a_j, a_k) = \frac{(1 - |a_j|^2)(1 - |a_k|^2)}{|1 - \overline{a_j}a_k|^2}.\] Since all of this also holds with the roles of $(a_j)$ and $(z_j)$ interchanged, there are positive constants $C_1=C_1(\varepsilon)$ and $C_2=C_2(\varepsilon)$ such that

\begin{equation}\label{eqn:4}
C_1 \frac{(1 - |a_j|^2)(1 - |a_k|^2)}{|1 - \overline{a_j}a_k|^2} \le \frac{(1 - |z_j|^2)(1 - |z_k|^2)}{|1 - \overline{z_j}z_k|^2} \le C_2\frac{(1 - |a_j|^2)(1 - |a_k|^2)}{|1 - \overline{a_j}a_k|^2}. 
\end{equation} 

Now $\rho(a_n, z_n) < 1 - \varepsilon := r$ and we know that every pseudohyperbolic disk is a Euclidean disk (see \cite{G}, Chapter 1). If we rotate the disk by $\alpha_n$, where $\alpha_n := |a_n|/a_n$ (interpreting $\alpha_n = 1$ if $a_n = 0$), we do not change pseudohyperbolic distances; that is, for $a, z \in \mathbb{D}$ and $\alpha \in \mathbb{T}$,
\[\rho(\alpha a, \alpha z) = \rho(a, z).\] So, $\alpha_n z_n \in D_\rho(|a_n|, 1 - \varepsilon)$. Now we use the fact that the pseudohyperbolic disk $D_\rho(|a_n|, 1-\varepsilon)$ is a Euclidean disk centered at the real number
\[p_n = \frac{1-r^2}{1-r^2|a_n|^2}|a_n| \in \mathbb{R}\] with radius \[R_n = \frac{1-|a_n|^2}{1 - r^2|a_n|^2}r.\]

Since $|a_n| \to 1$, there are finitely many $a_n$ for which $|a_n| \le 1 - \varepsilon$ and finitely many corresponding $z_n$. If we show that the Blaschke product with zeros $(z_n)_{n \ge N}$ is a Frostman Blaschke product, then Proposition~\ref{prop} implies that the Blaschke product with zeros $(z_n)_{n}$ is also a Frostman Blaschke product. Thus, we may assume that, for all $n$. we have $|a_n| \ge 1 - \varepsilon = r$ and $|z_n| \ge 1 - \varepsilon$.

The assumption that $r = 1 - \varepsilon < |a_n| = \rho(0, |a_n|)$, implies that $0$ is not in $D_\rho(|a_n|, r)$ for all such $a_n$, and therefore $0$ is not in the Euclidean disk $D(p_n, R_n)$. Since  $\alpha_n z_n \in D_\rho(|a_n|, r) = D(p_n, R_n)$, and $D(p_n, R_n)$ is a Euclidean disk with center on the positive real line, all points in $D(p_n, R_n)$ have modulus greater than $p_n - R_n$. A computation shows that
\[p_n - R_n = \frac{|a_n| - r^2|a_n|-r+r|a_n|^2}{1-r^2|a_n|^2} = \frac{(|a_n|-r)(1 + r|a_n|)}{1-r^2|a_n|^2} = \frac{|a_n|-r}{1-r|a_n|}.\] Since we assume that $|a_n| > r$ we have $p_n - R_n = \rho(|a_n|, r)$.

Thus,
$|z_n| = |\alpha_n z_n| \ge p_n - R_n = \rho(|a_n|, r)$. So
\[1 - |z_n|^2  \le  1 - \rho^2(|a_n|, r).\] 
Consequently, 
\begin{equation}\label{eqn:num}
1 - |z_n|^2 \le \frac{(1 - r^2)(1 - |a_n|^2)}{(1 - r |a_n|)^2}  \le \frac{1+r}{1-r} (1 - |a_n|^2).
\end{equation} Thus, for $C_r := \frac{1+r}{1-r}$ we have 
\[1 - |z_n|^2 \le C_r(1 - |a_n|^2),\] for all $n$ and we note that $C_r$ is a constant depending on $r$ but independent of $n$.
Similarly, since $\rho(a_n, z_n) < r$, we may interchange the roles of $a_n$ and $z_n$ above to see that $1 - |a_n|^2 \le C_r(1 - |z_n|^2)$, where $C_r$ is a  constant depending only on $r$ (and, hence, only on $\varepsilon$).

From the work above, we see that  $(1 - |a_m|^2) \asymp (1 - |z_m|^2)$; that is, there are positive constants $D_1$ and $D_2$ independent of $m$ with
\begin{equation}\label{eqn:equiv} D_1(1-|a_m|^2) \le 1 - |z_m|^2 \le D_2(1 - |a_m|^2)~\mbox{for all}~m.\end{equation}

Now, for all $z \in \mathbb{D}$ and all $j$ (see \cite[p. 4]{G})
\[\rho(a_j, z) \le \frac{\rho(a_j, z_j) + \rho(z_j, z)}{1 + \rho(a_j, z_j) \rho(z_j, z)}.\] Thus,
\[1 - \rho^2(a_j,z) \ge 1-\left(\frac{\rho(a_j, z_j) + \rho(z_j, z)}{1 + \rho(a_j, z_j) \rho(z_j, z)}\right)^2.\] 
Simplifying, we have
\[\frac{(1-|a_j|^2)(1-|z|^2)}{|1 - \overline{a_j}z|^2} \ge \frac{(1 - \rho^2(a_j, z_j))(1 - \rho^2(z_j, z))}{(1 + \rho(a_j, z_j)\rho(z_j, z))^2}.\]
Thus,
\[\frac{1-|a_j|^2}{|1 - \overline{a_j}z|^2} \ge \left(\frac{1 - \rho^2(a_j, z_j)}{\left(1 + \rho(a_j, z_j)\rho(z_j,z)\right)^2}\right)\left(\frac{1-|z_j|^2}{|1-\overline{z_j}z|^2}\right).\]
But by assumption $\rho(a_j, z_j) \le r < 1$ for all $j$, so 
\[\frac{1-|a_j|^2}{|1 - \overline{a_j}z|^2}  \ge \frac{(1- r^2)}{4}\left(\frac{1-|z_j|^2}{|1-\overline{z_j}z|^2}\right).\]
By equation~\eqref{eqn:equiv}, we have
\[\frac{1 - |z_j|^2}{|1 - \overline{a_j}z|^2} \ge D_1 \frac{(1-r^2)}{4}\left(\frac{1-|z_j|^2}{|1-\overline{z_j}z|^2}\right).\]
Therefore, for all $j$
\[\frac{1}{|1 - \overline{a_j}z|^2} \ge D_1\frac{(1-r^2)}{4}\left(\frac{1}{|1-\overline{z_j}z|^2}\right).\]
So there is a positive constant $C_3$, independent of $j$, such that for all $z \in \mathbb{D}$
 \[\frac{1}{|1 - \overline{a_j}z|} \ge C_3 \left(\frac{1}{|1-\overline{z_j}z|}\right).\]
Choose $\zeta \in \mathbb{T}$ and let $z \to \zeta$. Then 
\begin{equation}\label{eqn:den}
\frac{1}{|1 - \overline{a_j}\zeta|} \ge \frac{C_3}{|1-\overline{z_j}\zeta|}.\end{equation}
Since this holds for all $\zeta \in \mathbb{T}$, combining \eqref{eqn:num} and \eqref{eqn:den}, we see that there is a constant $C_4$ such that for all $j$,
\[\frac{1 - |a_j|^2}{|1 - \overline{a_j}\zeta|} \ge C_4\frac{1 - |z_j|^2}{|1 - \overline{z_j}\zeta|}.\] Thus, if $(a_j)$ is Frostman, so is $(z_j)$ and since this holds with the roles of $a_j$ and $z_j$ reversed, we have $(z_j)$ Frostman if and only if $(a_j)$ is Frostman.
\end{proof}

We note that the proof can be slightly shortened by using the characterization of Frostman sequences due to Cohn that appears in \eqref{eqn:Cohn}.
Since we can also obtain it directly, we prefer to do so.




Theorem \ref{movingzeros} should be compared with that of Matheson and Ross \cite{MR} who showed that every Frostman shift of a Frostman Blaschke product is Frostman; that is, if we start with a Frostman Blaschke product $B$ and we consider $\varphi_a \circ B$ where $\varphi_a(z) = (a-z)/(1 - \overline{a}z)$, then $\varphi_a \circ B$ is still a Frostman Blaschke product. We may think of this as saying that if we move the zeros of a Frostman Blaschke product in a systematic way (namely, to the places at which the Blaschke product assumes the value $a$), the resulting product is still Frostman. Their proof is based on a result of Tolokonnikov \cite{T} (that is itself based on a result of Pekarski \cite{P}) and a theorem of Hru\u s\u c\" ev and Vinogradov, \cite{HV}.

\begin{corollary} Let $(a_n)$ and $(z_n)$ be $\rho$-separated sequences with $\sup_n \rho(a_n, z_n) \le \lambda < 1$. Let $B$ and $C$ be the corresponding Blaschke products. Then $(a_n)$ is interpolating for $K_B^\infty$ if and only if $(z_n)$ is interpolating for $K_C^\infty$. \end{corollary}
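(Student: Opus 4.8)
The plan is to reduce the corollary to the two preservation results already established, using the dictionary recorded in the introduction that, for an $H^\infty$-interpolating sequence, being interpolating for the associated model space is equivalent to satisfying the Frostman condition. The guiding observation is that the property ``interpolating for $K_B^\infty$'' splits into two pieces that can be transported independently: being $H^\infty$-interpolating and being Frostman.

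First I would unpack one direction of the equivalence. Suppose $(a_n)$ is interpolating for $K_B^\infty$. Since $K_B^\infty \subseteq H^\infty$, any interpolation carried out by functions in $K_B^\infty$ is in particular an $H^\infty$-interpolation, so $(a_n)$ is an $H^\infty$-interpolating sequence. By the consequence of Vinogradov's work recalled in the introduction, an $H^\infty$-interpolating sequence is interpolating for $K_B^\infty$ if and only if it is Frostman; hence $(a_n)$ is also a Frostman sequence.

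Next I would transport each property to $(z_n)$. Writing $\varepsilon = 1 - \lambda > 0$, the hypothesis $\sup_n \rho(a_n, z_n) \le \lambda$ gives $\rho(a_n, z_n) \le 1 - \varepsilon < 1$ for all $n$. Since $(z_n)$ is $\rho$-separated and $(a_n)$ is $H^\infty$-interpolating, Proposition~\ref{prop:near} yields that $(z_n)$ is $H^\infty$-interpolating (replacing $\lambda$ by any $\lambda' \in (\lambda,1)$ to pass from the nonstrict to the strict inequality in that statement). Since $(a_n)$ is an interpolating Frostman sequence and $(z_n)$ is $\rho$-separated with $\rho(a_n, z_n) \le 1 - \varepsilon$, Theorem~\ref{movingzeros} yields that $(z_n)$ is Frostman. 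Applying the Vinogradov characterization once more, now to the $H^\infty$-interpolating sequence $(z_n)$, shows that $(z_n)$ is interpolating for $K_C^\infty$.

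Finally, I would note that the hypotheses are symmetric in $(a_n)$ and $(z_n)$: both sequences are $\rho$-separated and $\rho(z_n, a_n) = \rho(a_n, z_n) \le \lambda$. Interchanging the roles of the two sequences gives the reverse implication, completing the proof. I do not expect a substantive obstacle in this corollary: the analytic heavy lifting is already contained in Theorem~\ref{movingzeros}, and the only points requiring care are the verification that interpolation in $K_B^\infty$ forces $H^\infty$-interpolation and the harmless strict-versus-nonstrict adjustment in Proposition~\ref{prop:near}. The work here is purely in assembling the two preservation theorems through the Frostman/model-space equivalence.
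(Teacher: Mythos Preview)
Your proposal is correct and follows essentially the same route as the paper's proof: reduce ``interpolating for $K_B^\infty$'' to ``$H^\infty$-interpolating plus Frostman'' via the Vinogradov (Hru\v{s}\v{c}\"ev--Vinogradov) characterization, transport the $H^\infty$-interpolating property with Proposition~\ref{prop:near} and the Frostman property with Theorem~\ref{movingzeros}, and then invoke symmetry. The paper's version is simply terser, leaving the invocation of Theorem~\ref{movingzeros} implicit in the phrase ``the result now follows from Hru\v{s}\v{c}\"ev and Vinogradov's work.''
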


\begin{proof} Suppose first that $(a_n)$ is interpolating for $K_B^\infty$. Since $(a_n)$ is then interpolating for $H^\infty$ and $(z_n)$ is $\rho$-separated with $\sup_n \rho(a_n, z_n) \le \lambda < 1$, it follows from Proposition~\ref{prop:near} that $(z_n)$ is interpolating for $H^\infty$. Similarly, the same is true if we interchange the roles of $z_n$ and $a_n$. The result now follows  from Hru\u s\u c\" ev and Vingogradov's work. (See also \cite[(1.12)]{D}.) 
\end{proof}

\textbf{Acknowledgments.}  Since August 2018, Pamela Gorkin has been serving as a Program Director in the Division of Mathematical Sciences
at the National Science Foundation (NSF), USA, and as a component of this position, she
received support from NSF for research, which included work on this paper. 

Brett D. Wick's research supported in part by NSF grants DMS-1800057 and DMS-1560955, as well as ARC DP190100970.

Any opinions, findings, and conclusions or recommendations expressed in this material are those of the
authors and do not necessarily reflect the views of the National Science Foundation.


\begin{thebibliography}{12}
\bibitem{AG} Akeroyd, John R.; Gorkin, Pamela Constructing Frostman-Blaschke products and applications to operators on weighted Bergman spaces. J. Operator Theory 74 (2015), no. 1, 149--175.

\bibitem{CMR} Cima, Joseph A.; Matheson, Alec L.; Ross, William T. The Cauchy transform. Mathematical Surveys and Monographs, 125. American Mathematical Society, Providence, RI, 2006.

\bibitem{C} Cohn, William, Radial limits and star invariant subspaces of bounded mean oscillation, Amer. J. Math 108, (1986), 719--749.

\bibitem{C1} Cohn, William, A maximum principle for star invariant subspaces, Houston J. Math 14 (1988), 23 - 37.

\bibitem{D} Dyakonov, Konstantin M., A free interpolation problem for a subspace of $H^\infty$, Bull. Lond. Math. Soc. 50 (2018), no. 3, 477--486.

\bibitem{G} Garnett, John B., Bounded Analytic Functions, Revised first edition. Graduate Texts in Mathematics, 236. Springer, New York, 2007.

\bibitem{H} Hoffman, Kenneth, Bounded analytic functions and Gleason parts. Ann. of Math. (2) 86 (1967), 74--111.

\bibitem{HV} Hru\u s\u c\" ev, S. V.; Vinogradov, S. A., Inner functions and multipliers of Cauchy type integrals. Ark. Mat. 19 (1981), no. 1, 23--42. 

\bibitem{MR} Matheson, Alec L.; Ross, William T., An observation about Frostman shifts. Comput. Methods Funct. Theory 7 (2007), no. 1, 111--126. 

\bibitem{M} Mortini Raymond;  Rupp, Rudolf,  A  Space Odyssey:
Extension Problems and Stable Ranks,
Accompanied by introductory chapters on point-set topology and Banach algebras, in preparation.

\bibitem{P} Pekarski\u{i}, A. A., Estimates of the derivative of a Cauchy-type integral with meromorphic density and their applications. (Russian) Mat. Zametki 31 (1982), no. 3, 389--402, 474.  

\bibitem{T}
Tolokonnikov, V. A., Blaschke products with the Carleson-Newman condition, and ideals of the algebra $H^\infty$. (Russian) Zap. Nauchn. Sem. Leningrad. Otdel. Mat. Inst. Steklov. (LOMI) 149 (1986), Issled. Line\u{i}n. Teor. Funktsi\u{i}. XV, 93--102, 188; translation in J. Soviet Math. 42 (1988), no. 2, 1603--1610.

\bibitem{T1} Tolokonnikov, V. A., Carleson's Blaschke products and Douglas algebras. (Russian) Algebra i Analiz 3 (1991), no. 4, 186--197; translation in St. Petersburg Math. J. 3 (1992), no. 4, 881--892. 

\bibitem{V}
Vinogradov S. A., Some remarks on free interpolation by bounded and slowly growing analytic functions, Zap. Nauchn. Sm. Leningrad. Otdel. Mat. Inst. Steklov. (LOMI) {\bf 126} (1983), 35 -- 46.

\bibitem{Zhu}
Zhu, Kehe Operator theory in function spaces. Second edition. Mathematical Surveys and Monographs, 138. American Mathematical Society, Providence, RI, 2007.

\end{thebibliography}
\end{document}